\documentclass{article}

\usepackage{amsmath, amssymb, amsfonts, amsthm, enumerate, float, lineno, mathrsfs, tikz}

%\modulolinenumbers[5]

\usepackage{authblk}

\usepackage{caption}
\usepackage{subcaption}

\usepackage{multirow}% http://ctan.org/pkg/multirow
\usepackage{hhline}% http://ctan.org/pkg/hhline

%\captionsetup[subfigure]{subrefformat=simple,labelformat=simple,listofformat=subsimple}
%\renewcommand\thesubfigure{(\alph{subfigure})}

\usepackage{geometry}
\geometry{margin=1in}
%\usepackage{forest}
%\standaloneenv{forest}

\renewcommand{\S}{\mathcal{S}}

\newtheorem{proposition}{Proposition}
\newtheorem{lemma}{Lemma}
\newtheorem{theorem}{Theorem}
\newtheorem{corollary}{Corollary}
\theoremstyle{definition}

%%%%%%%%%%%%%%%%%%%%%%%%%%%%%%%%%%%%%%%%%
%%%%%%%%%%%%%%%%%%%%%%%%%%%%%%%%%%%%%%%%%

\title{Rooted forests that avoid sets of permutations}
\author{Katie Anders\thanks{kanders@uttyler.edu} }
\author{Kassie Archer\thanks{karcher@uttyler.edu}}
\affil{Department of Mathematics, The University of Texas at Tyler, 3900 University Blvd, Tyler, TX 75799}
\date{}
%\address{Department of Mathematics, The University of Texas at Tyler, 3900 University Blvd, Tyler, TX 75799}

\begin{document}

\maketitle

\begin{abstract}
We say that an unordered rooted labeled forest avoids the pattern $\pi\in\S_n$ if the sequence obtained from the labels along the path from the root to any vertex does not contain a subsequence that is in the same relative order as $\pi$. We enumerate several classes of forests that avoid certain sets of permutations, including the set of unimodal forests, via bijections with set partitions with certain properties. We also define and investigate an analog of Wilf-equivalence for forests.
\end{abstract}

%%%%%%%%%%%%%%%%%%%%%%%%%%%%%%%%%%%%%%%%%
%%%%%%%%%%%%%%%%%%%%%%%%%%%%%%%%%%%%%%%%%

\section{Introduction}

 A permutation $\pi\in\S_n$ is said to avoid a pattern $\sigma\in\S_k$ if there is no subsequence in $\pi$ that is in the same relative order as $\sigma$. 
This idea of pattern avoidance has been generalized to several other classes of combinatorial objects, including words, partitions, matchings, graphs, posets, and trees. In fact, pattern avoidance has been defined and studied for many different classes of trees and forests in several different contexts. Usually, pattern avoidance is defined in terms of sets of trees that avoid other trees, as in the case of unlabeled binary trees \cite{Rowland10}, unlabeled ternary trees \cite{GPPT12}, and planar rooted trees with labeled leaves \cite{Dot12}. In computer science, the number of occurrences of certain subtree patterns are of interest, as in \cite{FS80, SF83}. 
Occasionally, the objects of study are trees or forests that avoid a permutation instead of another tree. One example can be found in \cite{LPRS16} in which planar increasing trees (which the authors refer to as \textit{heaps}) are said to avoid a permutation if the sequence obtained by reading the labels of the vertices during a breadth-first search (i.e. reading across) avoids that permutation. 
%In this paper, we study unordered rooted forests which avoid sets of permutations. 

Our objects of study will be unordered (i.e. non-planar) rooted forests. Properties of these forests have previously been studied extensively. In particular, there are many interesting results regarding statistics on these trees and forests, such as descents \cite{Ges96, Gon16}, major index \cite{LiangWachs92}, inversions \cite{MalRio68, LiangWachs92}, leaves \cite{Ges96}, hook length \cite{CGG09, FerGou13}, leaders (analogs of right-to-left minima) \cite{GesSeo04, SeoShin07, Hou16}, and many others. Additionally, increasing trees and forests, which avoid the pattern $21$ in our context, have been widely studied and are useful combinatorial objects, as in \cite{KPP94,KMPW14,BFS92}, and are easily enumerated (see \cite{Stanley}). Alternating trees, which avoid the consecutive patterns $321$ and $123$ in our context, have also been enumerated in \cite{CDR01}. 

%The focus of our paper will be unordered (i.e. non-planar) rooted forests which are said to avoid a given set of permutations. 
In Section~\ref{sec:prelim}, we introduce the necessary background and definitions, including what it means for an unordered rooted labeled forest to avoid a given permutation or set of permutations. In Section \ref{sec:Wilf}, a analog of Wilf-equivalence is defined for forests. Specifically, it is determined that there are two forest-Wilf-equivalence classes for patterns of length three, in contrast to the case of permutations where there is only one Wilf-equivalence class. In the remaining sections, 
%Sections~\ref{unimodal}, \ref{upup}, and \ref{upLdownR}, 
we provide enumerations for the sets of forests on $[n]$ that avoid given sets of permutations of lengths 3 and 4 by constructing bijections with certain types of partitions of $[n]$ into sets or lists with special properties. The table in Figure~\ref{fig:results} summarizes the results obtained in this paper. 
\begin{figure}
\centering
\renewcommand{\arraystretch}{1.25}
\begin{tabular}{|l|l|l|}
\hline
Result & Forests & Enumeration\\ \hline
Theorem~\ref{thm:unimodal} & $F_n(213,312)$ & \multirow{2}{*}{ $\displaystyle\sum_{k=1}^n k! \,c(n,k)$ }\\ \cline{1-2}
 Corollary~\ref{cor:V} & $F_n(231, 132)$& \\ \hline
 Theorem~\ref{thm:uni - 123} & $F_n(213,312, 123)$ & \multirow{2}{*}{ $\displaystyle\sum_{k=1}^n \mathcal{B}(n)\,c(n,k)$ }\\ \cline{1-2}
 Corollary~\ref{cor:unimodal plus} & $F_n(231, 132, 321)$& \\ \hline
 Theorem~\ref{thm:uni -321} & $F_n(213,312, 321)$ & \multirow{2}{*}{ $\displaystyle\sum_{k=1}^n k! \,S(n,k)$ }\\ \cline{1-2}
 Corollary~\ref{cor:unimodal plus} & $F_n(231, 132, 123)$& \\ \hline
 Theorem~\ref{thm:uni plus} & $F_n(312, 213, 132)$&  \multirow{4}{*}{ $\displaystyle  \sum_{k=1}^n \frac{n!}{k!}{{n-1}\choose{k-1}}$} \\ \cline{1-2}
 Corollary~\ref{cor:Vplus} & $F_n(132, 231, 312)$&  \\ \cline{1-2}
 Theorem~\ref{thm:one descent plus}  & $F_n(321, 132, 213)$&\\ \cline{1-2}
  Theorem~\ref{thm:one descent plus} & $F_n(123, 312, 231)$&\\ \hline
     Theorem~\ref{thm:uni+231} & $F_n(213, 312, 231)$ &  \multirow{2}{*}{ recurrence (see theorem statement) } \\ \cline{1-2}
  Corollary~\ref{cor:V+213} & $F_n(231, 132, 213)$&\\ \hline
   Theorem~\ref{thm:one descent} & $F_n(321, 2143, 3142)$ &  \multirow{2}{*}{ $\displaystyle n!  + \sum \frac{n!}{2^{\ell}} {{n-k-1}\choose{\ell-1}}{{k}\choose{\ell}}$} \\ \cline{1-2}
  Corollary~\ref{cor:downdown} & $F_n(123, 3412, 2413)$&\\ \hline
\end{tabular}
\caption{For each result listed, the sets of forests that avoid the certain sets of permutations are enumerated. The sum associated to Theorem~\ref{thm:one descent} and Corollary~\ref{cor:downdown} occurs over all $1\leq \ell\leq k\leq n$ so that $\ell+k\leq n$.}
\label{fig:results}
\end{figure}

\section{Preliminaries}\label{sec:prelim}

\subsection{Permutations, partitions, and compositions}

Let $\S_n$ denote the set of permutations on the set $[n] = \{1,2,\ldots, n\}$. We write a permutation in its one-line notation as $\pi_1\pi_2\ldots \pi_n$, and we write a permutation in its cycle notation as a product of disjoint cycles. For example,  $\pi = 4672513$ is a permutation written in its one-line notation, and $\pi = (1426)(37)(5)$ is the same permutation written in its cycle notation. The unsigned Stirling numbers of the first kind, denoted by $c(n,k)$ (or ${ n\brack k}$), enumerate the permutations in $\S_n$ which can be decomposed into $k$ disjoint cycles. 

A \textit{descent} of a permutation of $[n]$ (or more generally any ordered set) is an element $i$ so that $\pi_i>\pi_{i+1}$. An \textit{ascent} is an element $i$ so that $\pi_i<\pi_{i+1}$. An element $\pi_i$ is called a \textit{left-to-right maximum} if $\pi_i>\pi_j$ for all $j<i$, and we say $\pi_i$ is a \textit{left-to-right minimum} if $\pi_i<\pi_j$ for all $j<i$. The \textit{inverse} of a permutation $\pi \in \S_n$ is the permutation $\pi^{-1}$ defined by $\pi^{-1}_i = j$ if and only if $\pi_j = i$. The \textit{reverse} of a permutation $\pi\in\S_n$ is the permutation $\pi^r$ that satisfies $\pi^r_i = \pi_{n+1-i}$ for all $i\in[n]$. For a permutation $\pi\in\S_n$, we define the \textit{complement} $\pi^c$ to be the permutation that satisfies $\pi_i^c = n+1-\pi_i$. It is easy to check that for any permutation $\pi\in\S_n$, these trivial symmetries satisfy the relationship $(\pi^{-1})^r = (\pi^c)^{-1}$.

A permutation $\pi  \in \S_n$ is said to \textit{contain} the pattern $\sigma \in\S_k$ if there is some sequence $i_1<i_2<\cdots<i_k$ so that $\pi_{i_1}\pi_{i_2}\ldots\pi_{i_k}$ is in the same relative order as $\sigma_1\sigma_2\ldots \sigma_k$.
For example, the permutation $\pi = 51263748$ contains the pattern $\sigma = 231$ since there is a subsequence (for example, $\pi_4\pi_6\pi_7 = 674$) in the same relative order as $\sigma$. We say a permutation $\pi$ \textit{avoids} the pattern $\sigma$ if there is no such subsequence. As an example, the permutation $\pi$ above avoids $\sigma = 321$. 

Many of the things that we have defined for permutations on $[n]$ can be applied to permutations on any finite linearly ordered set $A$. For example, $\pi = 59381$ is a permutation on the set $A = \{1,3,5, 8, 9\}$. It has descents at positions 2 and 4. The left-to-right maxima are 5 and 9, and the left-to-right minima are 5, 3, and 1. The reverse of $\pi$ is $\pi^r = 18395$, the ``complement'' is $\pi^c = 51839$ (obtained by replacing the smallest with the largest, the second smallest with the second largest, etc.), and the ``inverse'' of $\pi$ is $\pi^{-1} = 95183$. The easiest way to obtain this is by taking the permutation in the same relative order as $\pi$, which in this case is 35241, taking the inverse, and replacing the appropriate values with values from $A$. Also, notice that $\pi$ contains 231 and avoids 123. These ideas will be useful at times when we are only concerned with permutations of a subset of $[n]$.

A \textit{set partition} of $[n]$ is a collection of disjoint subsets of $[n]$ whose union is $[n]$. An \textit{ordered} set partition is a set partition in which the order of the sets matters. A partition of $[n]$ into \textit{lists} is a set partition in which the elements of the subsets are ordered. 
For example, $\{1,3,4,5\} \{2,6\}$ and $\{1,6\} \{2\} \{3,4,5\}$ are two different set partitions of $[6]$. As ordered set partitions, $\{1,6\} \{2\} \{3,4,5\}$ and $ \{2\} \{1,6\} \{3,4,5\}$ would be considered different, and as partitions of $[6]$ into lists, $\{1,6\} \{2\} \{3,4,5\}$ and $\{6,1\} \{2\} \{3,5,4\}$ would be considered different.

A \textit{composition} of $n$ is a list $\lambda = (\lambda_1, \lambda_2, \ldots, \lambda_k)$ so that $\lambda_1+ \lambda_2 + \cdots + \lambda_k = n$. For example, $(3,3,4)$, $(3,4,3)$, and $(2,2,1,4,1)$ are three different compositions of $10$. It is well known that there are exactly ${{n-1}\choose{k-1}}$ compositions of $n$ into $k$ parts (see \cite{Stanley}). 

\subsection{Rooted forests}
Let $F_n$ denote the set of unordered rooted labeled forests on $[n]$. We draw forests as rooted trees with an unlabeled root as in Figure~\ref{fig:rooted tree}. 

\begin{figure}[h]
\centering
\includegraphics{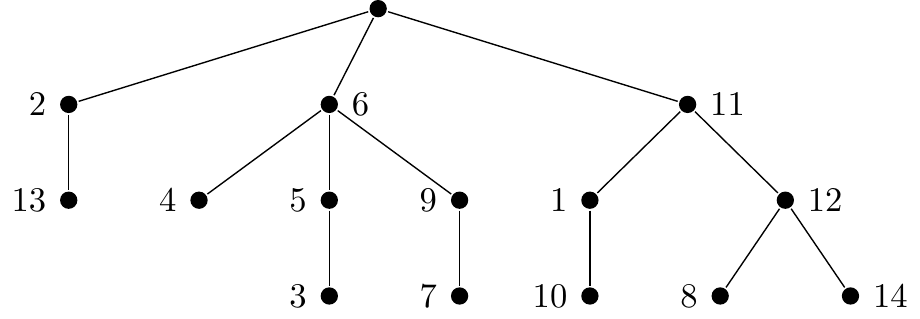}
  \caption{An example of a rooted forest on $[14]$. There are three trees in this forest with roots labeled 2, 6, and 11. }
  \label{fig:rooted tree}
  \end{figure}

A vertex $j$ is a \textit{descendant} of vertex $i$ if $i$ appears in the unique path from the root to $j$. In this case, we say that $i$ is an \textit{ancestor} of $j$. If $i$ and $j$ are adjacent in the forest, then we say that $i$ is the \textit{parent} of $j$ and that $j$ is a \textit{child} of $i$. For example, in Figure~\ref{fig:rooted tree}, vertices 11 and 12 are ancestors of 14, while vertex 7 is a child of 9 and a descendant of both 6 and 9. 

A vertex $i$ is a \textit{top-down maximum} for a forest if it is larger than all of its ancestors. For example, in Figure~\ref{fig:rooted tree}, seven of the vertices are top-down maxima, namely 2, 6, 9, 11, 12, 13, and 14. Notice that roots of trees in a forest are always top-down maxima since they have no ancestors. An \textit{increasing forest} is one for which all vertices are top-down maxima, as in Figure~\ref{fig:inc varphi}. 

We say that $F'$ is a \textit{subforest} of $F$ if $F'$ contains  a subset of vertices from $F$ so that if vertex $i$ appears in $F'$, then so does every ancestor of $i$. The largest increasing subforest of the forest in Figure~\ref{fig:rooted tree} is the one containing vertices 2, 6, 9, 11, 12, 13, and 14. 

We say that a labeled rooted forest on $[n]$ avoids the pattern $\sigma\in\S_k$ if along each path from the root to a vertex, the sequence of labels $i_1i_2i_3\ldots i_m$ do not contain a subsequence that is in the same relative order as $\sigma_1\sigma_2\ldots \sigma_k$. 
Let $F_n(\sigma)$ denote the set of forests on $[n]$ that avoid $\sigma$ and let $f_n(\sigma)$ be the number of such forests. Notice that $F_n(21)$ is the set of increasing forests on $[n]$ and $F_n(12)$ is the set of decreasing forests on $[n]$.

  It is well known that the number of increasing forests on $[n]$ is $f_n(21) = n!$. (Similarly, the number of decreasing forests is $f_n(12) = n!$.) Perhaps the easiest way to see this is via induction. Any increasing forest on $[n]$ must have the label $n$ assigned to a leaf. On a forest of $n-1$ elements, there are $n$ places one can add such a leaf (as a child of the $n-1$ vertices or as a root). Thus $f_n(21) = n\cdot f_{n-1}(21)$. Since there is one forest of size one, there must be $n!$ increasing forests. A natural bijection, which we call $\varphi$, between permutations on an ordered set $A$ and increasing forests on $A$ can be described in the following way (as in \cite{Stanley}): Given $\pi\in \S_n$, the forest $\varphi(\pi)$ is obtained by letting all left-right minima be roots of the trees. For the remaining vertices, let $i$ be the child of the rightmost element $j$ of $\pi$ that precedes $i$ and is less than $i$.

   In Figure~\ref{fig:inc varphi}, we see an example of this bijection. If $\pi = 3,6,8,4,1,10,2,9,7,5$, then the left-to-right minima 3 and 1 will be roots of trees in the increasing forest. Since 3 is the rightmost element to the left of 6 that is smaller than it, 6 will be a child of 3. Similarly, 8 will be a child of 6. Since 3 is the rightmost element to the left of 4 that is smaller than it, 4 will be a child of 3, and so on. The inverse of this bijection is to traverse the forest clockwise starting at the root (after ordering the children of each vertex least to greatest from left to right as in Figure~\ref{fig:inc varphi}), reading off each label as it is reached. 
  
\begin{figure}[h]
\centering
\includegraphics{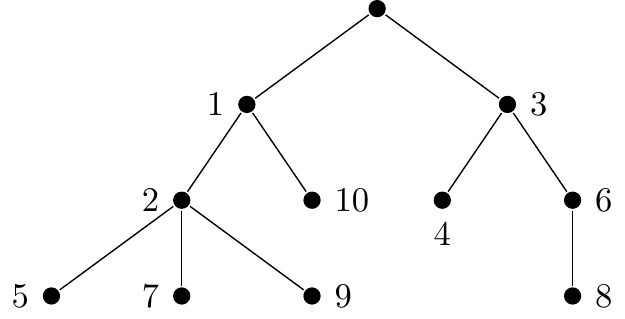}
  \caption{The map $\varphi$ sends a permutation of $[n]$ to an increasing forest on $[n]$. The forest pictured here is $\varphi(3,6,8,4,1,10,2,9,7,5)$.}
  \label{fig:inc varphi}
  \end{figure}
  
%\todo{include more about increasing trees -- proof there are $n!$? bijection with permutations that we use over and over.}

%\todo{add Wilf-equivalence definition, trivial equivalence, and the non-trivial equivalence 312 and 312 (I think the proof will be short.)}

\section{Forest-Wilf-equivalence for patterns of size 3}\label{sec:Wilf}

An analog of Wilf-equivalence can be defined for rooted forests as follows. The pattern $\sigma$ is \textit{forest-Wilf-equivalent} to the pattern $\tau$ if $f_n(\sigma) = f_n(\tau)$. A set of patterns $S = \{\sigma_1, \ldots, \sigma_k\}$ is \textit{forest-Wilf-equivalent} to a set of patterns $T = \{\tau_1, \ldots, \tau_\ell\}$ if $f_n(\sigma_1, \ldots, \sigma_k) = f_n(\tau_1, \ldots, \tau_\ell)$. The trivial symmetry of complementation of permutations can be adapted to this setting. There is no clear analog of reverse or inverse for forests that preserves forest-Wilf-equivalence. 

 For a forest $F$ on $[n]$, let $F^c$ be the forest which has label $n+1-i$ at a vertex if and only if $F$ has label $i$ at the same vertex. Notice that we can define a map $\varphi_D$, which takes in a permutation $\pi\in\S_n$ and returns a decreasing forest on $[n]$, by applying $\varphi$ to get an increasing forest and then taking the complement of the forest obtained. 
\begin{proposition}\label{prop:comp}
For $n\geq 1$, $f_n(\sigma_1, \ldots, \sigma_m) = f_n(\sigma_1^c, \ldots, \sigma_m^c)$.
\end{proposition}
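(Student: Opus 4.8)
The plan is to exhibit an explicit bijection between $F_n(\sigma_1,\ldots,\sigma_m)$ and $F_n(\sigma_1^c,\ldots,\sigma_m^c)$, namely the forest-complement map $F\mapsto F^c$ defined just above the statement. First I would observe that $F\mapsto F^c$ is an involution on the full set $F_n$: replacing each label $i$ by $n+1-i$ and then again by $n+1-(n+1-i)=i$ recovers $F$, and the operation only relabels vertices without altering the underlying tree structure. Hence it is a bijection from $F_n$ to itself, and it suffices to show that it carries $F_n(\sigma_1,\ldots,\sigma_m)$ onto $F_n(\sigma_1^c,\ldots,\sigma_m^c)$.

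The heart of the argument is a relative-order observation. Since $F$ and $F^c$ share the same underlying tree, a root-to-vertex path in $F$ with label sequence $i_1 i_2 \cdots i_\ell$ corresponds to the path occupying the same vertices in $F^c$, which has label sequence $(n+1-i_1)(n+1-i_2)\cdots(n+1-i_\ell)$. Because $a<b$ if and only if $n+1-a > n+1-b$, applying $i\mapsto n+1-i$ to every entry of a sequence reverses all of its pairwise comparisons; consequently a subsequence of $i_1\cdots i_\ell$ is in the same relative order as $\sigma$ precisely when the correspondingly positioned subsequence of the complemented path is in the same relative order as $\sigma^c$. This is exactly the statement that the path in $F$ contains $\sigma$ if and only if the matching path in $F^c$ contains $\sigma^c$.

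Applying this equivalence across all root-to-vertex paths simultaneously, and for each pattern $\sigma_j$ in turn, I would conclude that $F$ avoids every $\sigma_j$ if and only if $F^c$ avoids every $\sigma_j^c$. Thus the complement map restricts to a bijection $F_n(\sigma_1,\ldots,\sigma_m)\to F_n(\sigma_1^c,\ldots,\sigma_m^c)$, and comparing cardinalities yields $f_n(\sigma_1,\ldots,\sigma_m)=f_n(\sigma_1^c,\ldots,\sigma_m^c)$.

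I do not anticipate a genuine obstacle here; the only point demanding care is the relative-order observation in the second paragraph, which must be phrased so that it applies to an \emph{arbitrary} subsequence of a path (of any length $k$), not merely to the full path from root to vertex. Everything else follows directly from the two structural facts that complementation fixes the tree shape and acts on labels through the order-reversing bijection $i\mapsto n+1-i$.
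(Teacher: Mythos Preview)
Your proposal is correct and follows exactly the paper's approach: the paper simply asserts that $F\in F_n(\sigma_1,\ldots,\sigma_m)$ if and only if $F^c\in F_n(\sigma_1^c,\ldots,\sigma_m^c)$, from which the result follows. You have merely written out the details behind that assertion.
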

\begin{proof}
Clearly, $F \in F_n(\sigma_1, \ldots, \sigma_m)$ if and only if $F^c \in F_n(\sigma_1^c, \ldots, \sigma_m^c)$, and thus the result follows. 
\end{proof}

In this section, we determine that there are two forest-Wilf-equivalence classes for patterns of length three, in contrast to the case of permutations where there is only one Wilf-equivalence class. This is done by explicitly constructing a bijection between forests avoiding 312 and those avoiding 321. 
%Theorems 5 and 6 of \cite{AA first paper} provide another example of a nontrivial Wilf-equivalence of sets of patterns, namely that the sets $\{312, 213, 132\}$ and $\{321, 132, 213\}$ are Wilf-equivalent. 
Theorems \ref{thm:uni plus} and \ref{thm:one descent plus} of this paper provide another example of a nontrivial forest-Wilf-equivalence of sets of patterns, namely that the sets $\{312, 213, 132\}$ and $\{321, 132, 213\}$ are forest-Wilf-equivalent.

By direct computation, it has been determined that $f_n(321)=f_n(231)$ for $n=1,2,3,4$.  However, $f_5(321)=918$, while $f_5(231)=917$.  The difference is that of the $60$ rooted forests with shape given in Figure \ref{fig: uneven tree on 5}, $43$ avoid $321$ while only $42$ avoid $231$.

\begin{figure}[h]
\centering
\includegraphics{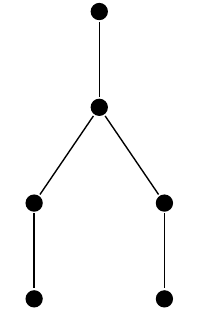}
  \caption{The shape of the rooted forest on $[5]$ that has a different number of forests avoiding $321$ than $231$}
  \label{fig: uneven tree on 5}
  \end{figure}

\begin{theorem}\label{thm:Wilf}
The patterns $321$ and $312$ are forest-Wilf-equivalent.
\end{theorem}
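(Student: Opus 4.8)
The plan is to translate both avoidance conditions into statements about a single vertex together with its chain of ancestors, and then to produce the bijection inductively by peeling off the largest label. First I would prove the two reformulations below, each of which follows by taking, in any occurrence of the pattern along a root-to-vertex path, the \emph{deepest} of the three vertices to be the distinguished vertex $v$. A forest avoids $321$ if and only if, for every vertex $v$, the ancestors of $v$ whose labels exceed $\ell(v)$ increase as one reads from the root down to $v$ (equivalently, along each path the non-top-down-maxima are increasing). A forest avoids $312$ if and only if, for every vertex $v$, no ancestor with label greater than $\ell(v)$ lies above an ancestor with label less than $\ell(v)$; that is, along the path from the root to $v$ every ancestor smaller than $\ell(v)$ occurs above every ancestor larger than $\ell(v)$. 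These give two parallel, purely local descriptions of the two forest classes, which is what makes a direct comparison feasible.

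Next I would set up the bijection by induction on $n$, pivoting on the vertex carrying the largest label $n$. Since $n$ can only play the role of the leading (largest) symbol of either pattern, the reformulations force the subtree hanging below $n$ to be \emph{increasing} in a $321$-avoiding forest and \emph{decreasing} in a $312$-avoiding forest, while imposing no new constraint on the part of the forest not below $n$. I would therefore delete $n$, splice its children up to its former parent, recurse on the resulting forest on $[n-1]$, and reinsert $n$ above a chosen collection of subtrees---converting the increasing subtrees required on the $321$ side into the decreasing subtrees required on the $312$ side by complementing labels within them. The maps $\varphi$ and $\varphi_D$ of the preliminaries, which already convert between increasing and decreasing forests by complementation, are the natural tool for this local conversion. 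It is worth noting that the usual permutation bijections between $\av(321)$ and $\av(312)$ pass through the inverse or reverse, neither of which has a forest analog (as remarked above), so a genuinely tree-adapted construction is required.

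The hard part will be the \emph{coupling} between the pivot subtree and the rest of the forest: although the condition at each vertex is local, a vertex's ancestor chain runs through the spliced-out structure, so I must check that deleting, relabeling, and reinserting along the pivot preserves the global ancestor conditions for \emph{every} vertex simultaneously---both the ``larges increase'' condition for $321$ and the ``smalls above larges'' condition for $312$---and that the number of admissible reinsertion positions matches on the two sides. Concretely, the number of child-subtrees of the pivot's parent that are increasing (on the $321$ side) must be reconciled with the number that are decreasing (on the $312$ side) after the recursive relabeling, and I must verify invertibility by reconstructing the pivot's location and its set of hung subtrees from the image. I expect the cleanest way to discharge these checks is to phrase the whole argument through the two ancestor-chain characterizations, verifying directly that each local move preserves the relevant property rather than re-examining occurrences of the patterns by hand. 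Should the reinsertion counts fail to match shape-by-shape, the fallback is to extract from the same peeling a single recurrence satisfied by both $f_n(321)$ and $f_n(312)$ and conclude equality by induction.
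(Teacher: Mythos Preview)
Your ancestor-chain characterizations are correct, as is the observation that the subtree below the vertex labeled $n$ must be increasing on the $321$ side and decreasing on the $312$ side. But the gap is exactly the coupling you flag. In the ``delete $n$, splice, recurse, reinsert'' scheme, the induction hypothesis yields only a bijection $F_{n-1}(321)\to F_{n-1}(312)$; reinserting $n$ bijectively requires matching, at each potential parent $p$, the increasing child-subtrees of $p$ on the $321$ side with the decreasing child-subtrees on the $312$ side, and nothing you assume gives that. ``Complementing labels within the subtrees'' is not meaningful once those subtrees have been absorbed into $F'$ and rearranged by the recursion: for instance, the path with labels $3,1,4,2$ from root to leaf and the tree with root $3$, child $1$, and leaves $2$ and $4$ both children of $1$, are each $321$-avoiding and collapse to the same $F'$ (the path $3,1,2$) upon deleting $4$; distinguishing them afterward requires knowing whether $2$ sat under $4$, yet after recursion the label $2$ need no longer be a child of $1$ at all. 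The recurrence fallback has the same defect: peeling off $n$ gives $f_n(321)=\sum_{F'}\sum_p 2^{i_p(F')}$ and $f_n(312)=\sum_{G'}\sum_p 2^{d_p(G')}$, where $i_p,d_p$ count the increasing, respectively decreasing, child-subtrees at $p$; equating these is a refined equidistribution, not a consequence of $f_{n-1}(321)=f_{n-1}(312)$.

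You are also led astray by the remark that the standard permutation bijections pass through inverse or reverse. Simion--Schmidt does neither: it fixes the left-to-right extrema and greedily reassigns the remaining values. That is exactly what the paper does. One fixes both the shape of the forest and the set of top-down maxima, then in a breadth-first pass replaces each non-top-down-maximum label by the smallest (going $312\to 321$) or the largest admissible (going $321\to 312$) label among its non-top-down-maximum descendants, shifting the others to preserve relative order. With shape and top-down maxima fixed, ancestor relations never change and the avoidance condition along each path depends only on the relative order of the non-top-down-maximum labels, so the coupling problem never arises.
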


\begin{proof}
We generalize the proof of Simion and Schmidt from \cite{SimionSchmidt}. For a 312-avoiding forest, define a map $\alpha$ by the following process.  Fix all of the top-down maxima and, doing a breadth-first search, replace each non-top-down maximum element $i$ with the smallest element that is a descendant of $i$ and is not a top-down maximum, if there is such a descendant. Shift all other non-top-down maxima that are descendants of $i$ so that the previous relative order is maintained. Continue until each non-top-down maxima has been checked. Since all occurrences of a 321 pattern have been changed to an occurrence of a 312 pattern, the resulting forest is 321-avoiding.

To invert this process, we define a map $\beta$ as follows. Start with a 321-avoiding forest, fix all of the top-down maxima, and, doing a breadth-first search, replace each non-top-down maximum element $i$ with the largest element that is a descendant of $i$ and does not introduce a new top-down maximum, if such an element exists. As before, shift all other non-top-down maxima that are descendants of $i$ so that the previous relative order is maintained. Continue until each non-top-down maxima has been checked. Since all occurrences of a 312 pattern have been changed to an occurrence of a 321 pattern, the resulting forest is 312-avoiding.

These processes are inverses of each other. At the $k$th step of $\alpha$ or $\beta$, the vertex under consideration $v$ is either a top-down maxima or it is not. If it is, it is fixed under both maps. If it is not, only the elements of the subtree rooted at $v$ are permuted. 

Suppose $F$ is a 321-avoiding forest and $v$ is a not a top-down maximum. Let $A_1, A_2, \ldots, A_k$ be the sets of elements of the trees rooted at the $k$ children of $v$.  Suppose first that the vertex $v$ is the first non-top-down maximum encountered in a breadth-first search of the forest $F$. When $\beta$ is applied to $F$, the elements of the subtree of $F$ rooted at $v$ remain in that subtree. Under $\beta$, $v$ is replaced by its largest descendant $d_v$ that is smaller than $p_v$, the parent of $v$. If $a$ is a descendant of $v$ that is larger than $p_v$, and $a \in A_i$, then after this step, $a$ remains in the same place and in particular remains in $A_i$. If $b$ is a descendant of $v$ that is smaller than $p_v$ and $b\in A_\ell$, then $b$ replaces the next largest descendant of $v$ and may move to a new tree. If the next largest descendant of $v$ is in $A_j$, then $b\in A_j$ after this step. The element $v$ replaces the smallest descendant of $v$. The map $\beta$ will continue to permute the remaining vertices within their respective subtrees. In particular, $a$ remains in $A_i$ and $b$ remains in $A_j$.  

When $\alpha$ is applied to $\beta(F)$, the element $d_v$ is replaced with the smallest of its descendants, which must be $v$. Thus $\alpha\circ\beta$ fixes $v$. Notice that if the element $a$ is as above, $a$ remains fixed under this step of $\alpha$ and so $a \in A_i$ as before. Since $b<p_v$, $b$ will replace the next smallest descendant of $v$, and so $b$ will move back to $A_\ell$. Thus, each $A_i$ is fixed under $\alpha \circ \beta$. For each child of $v$, we can do the same process and each child of $v$ will be fixed under $\alpha\circ \beta$. We can continue in this manner for every non-top-down maximum of the forest.  

%These processes are inverses of each other. Since each map fixes the top-down maxima, it is enough to show that the remaining nodes are also fixed under the composition $\alpha \circ \beta$. Let $F$ be a 321-avoiding forest. Suppose that the vertex $v$ is the first non-top-down maximum encountered in a breadth-first search of the forest $F$.  Since $F$ is 321-avoiding and the parent $p_v$ of $v$ is larger than $v$, the element $v$ must be smaller than all of its descendants. When $\beta$ is applied to $F$, the elements of the subtree of $F$ rooted at $v$ remain in that subtree. Under $\beta$, $v$ is replaced by its largest descendant $d_v$ of $v$ that is smaller than $p_v$. If $a$ is a descendant of $v$ that is larger than $p_v$, its label is unchanged following this replacement. If $b$ is a descendant of $v$ that is smaller than $p_v$, then $b$ replaces the next largest descendant of $v$. The element $v$ replaces the smallest descendant of $v$. The map $\beta$ will continue to permute the remaining vertices within their respective subtrees. 
\end{proof}

It should be noted that this bijection preserves the top-down maxima and the shape of the forests.

\begin{figure}[h]
\centering
\includegraphics{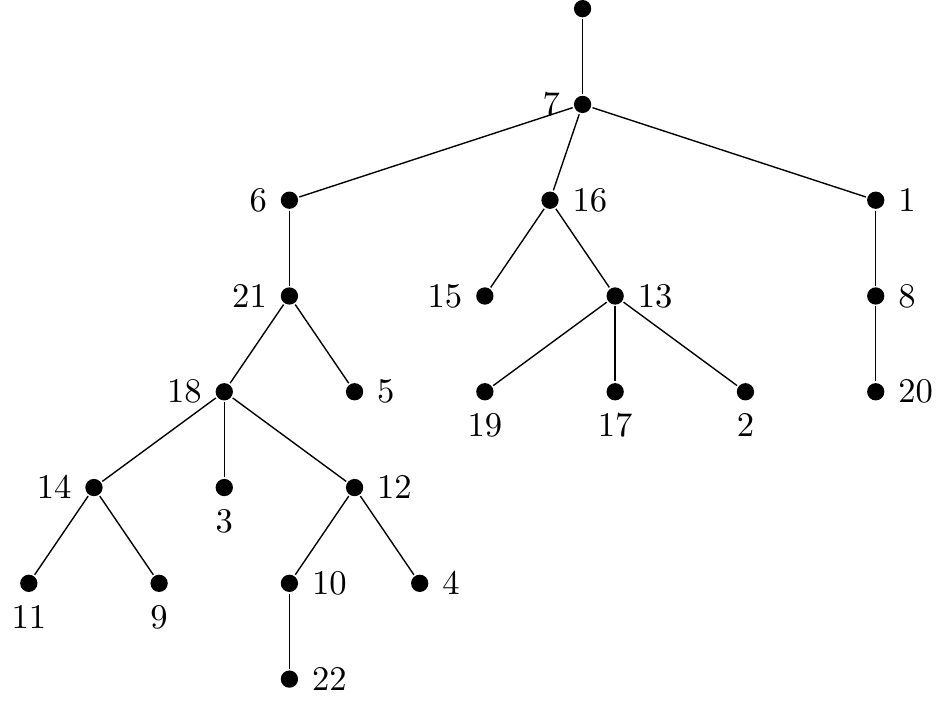}
  \caption{A rooted forest $F$ on $[22]$ that avoids 312. See Figure \ref{fig: AlphaOutput312avoiding} for $\alpha(F)$ where $\alpha$ is the map defined in the proof of Theorem \ref{thm:Wilf}.}
  \label{fig: AlphaInput312avoiding}
  \end{figure}

\begin{figure}[h]
\centering
\includegraphics{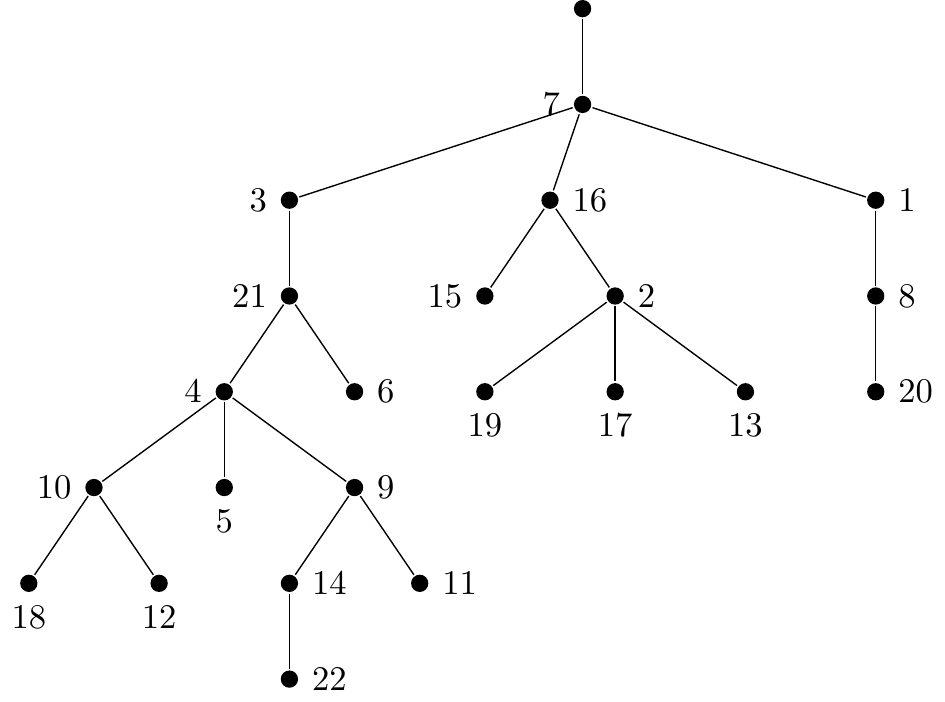}
  \caption{The 321-avoiding forest $\alpha(F)$ where $F$ is the 312-avoiding forest in Figure \ref{fig: AlphaInput312avoiding}.}
  \label{fig: AlphaOutput312avoiding}
  \end{figure}

  We can also define an analog of c-Wilf-equivalence for consecutive pattern avoidance to these forests. We say that two consecutive patterns are c-forest-Wilf-equivalent if the forests that avoid each of the consecutive patterns are equinumerous.
There are $3$ c-forest-Wilf-equivalence classes for consecutive pattern avoidance with patterns of length $3$, as can be seen in Figure \ref{fig:non-planar forests}.

\begin{figure}[h]
\centering
\renewcommand{\arraystretch}{1.25}
\begin{tabular}{c|c|c|c|c|c|c}
 &\multicolumn{3}{c|}{Classical} &\multicolumn{3}{c}{Consecutive}\\
 \hline
n & $321$ & $231$ & $132$ &$321$ & $231$ & $132$  \\
\hline
$1$ &$1$ &$1$ &$1$ &$1$ &$1$ &$1$\\
\hline
$2$ &$3$ &$3$ &$3$ &$3$ &$3$ &$3$\\
\hline
$3$ &$15$ &$15$ &$15$ &$15$ &$15$ &$15$\\
\hline
$4$ &$104$ &$104$ &$104$ &$107$ &$106$ &$106$\\
\hline
$5$ &$918$ &$917$ &$918$ &$997$ &$973$ &$972$\\
\end{tabular}
\caption{The number of forests on $\left[n\right]$ that avoid the given (classical or consecutive) pattern.}
\label{fig:non-planar forests}
\end{figure}

%\begin{figure}
%\centering
%\renewcommand{\arraystretch}{1.25}
%\begin{tabular}{c|c|c|c|c|c|c}
% &\multicolumn{3}{c|}{Classical} &\multicolumn{3}{c}{Consecutive}\\
% \hline
%n & $321$ & $231$ & $132$ &$321$ & $231$ & $132$  \\
%\hline
%$1$ &$1$ &$1$ &$1$ &$1$ &$1$ &$1$\\
%\hline
%$2$ &$3$ &$3$ &$3$ &$3$ &$3$ &$3$\\
%\hline
%$3$ &$14$ &$14$ &$14$ &$14$ &$14$ &$14$\\
%\hline
%$4$ &$87$ &$87$ &$87$ &$90$ &$89$ &$89$\\
%\hline
%$5$ &$668$ &$667$ &$668$ &$747$ &$723$ &$722$\\
%\end{tabular}
%\caption{The number of non-planar binary forests on $\left[n\right]$ that avoid the given pattern}
%\label{fig:non-planar binary forests}
%\end{figure}

%\begin{figure}
%\centering
%\renewcommand{\arraystretch}{1.25}
%\begin{tabular}{c|c|c|c|c|c|c}
% &\multicolumn{3}{c|}{Classical} &\multicolumn{3}{c}{Consecutive}\\
% \hline
%n & $321$ & $231$ & $132$  & $321$ & $231$ & $132$\\
%\hline
%$1$ &$1$ &$1$ &$1$ &$1$ &$1$ &$1$\\
%\hline
%$2$ &$4$ &$4$ &$4$ &$4$ &$4$ &$4$\\
%\hline
%$3$ &$29$ &$29$ &$29$ &$29$ &$29$ &$29$\\
%\hline
%$4$ &$304$ &$304$ &$304$ &$307$ &$306$ &$306$\\
%\hline
%$5$ &$4158$ &$4156$ &$4158$ &$?$ &$?$ &$?$ \\
%\end{tabular}
%\caption{The number of plane forests on $\left[n\right]$ that avoid the given pattern}
%\label{fig:plane forets}
%\end{figure}

\section{Forests avoiding the set $\{312, 213\}$}\label{unimodal}

We say that a labeled rooted forest on $[n]$ is \textit{unimodal} if along each path from the root to a vertex, the labels are increasing, then decreasing. These are exactly the forests that avoid the permutations $312$ and $213$. %Let $u(n)$ denote the number of unimodal forests on $[n]$. 
For an example, see Figure~\ref{fig:unimodal}.

Recall that the unsigned Stirling numbers of the first kind, $c(n,k)$, count the number of permutations in $\S_n$ which can be decomposed into $k$ disjoint cycles. The set of ordered cycle decompositions of permutations in $\S_n$ are those cycle decompositions for which the order in which the cycles appear matters. 

\begin{theorem}\label{thm:unimodal}
For $n\geq 1$, $$f_n(312, 213) = \sum_{k=1}^n k! \,c(n,k).$$
\end{theorem}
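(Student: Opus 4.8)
The plan is to construct an explicit bijection $\Psi$ between $F_n(312,213)$ and the set of ordered cycle decompositions of permutations in $\S_n$, whose cardinality is $\sum_{k=1}^n k!\,c(n,k)$: an unordered decomposition into $k$ cycles is counted by $c(n,k)$, and there are $k!$ ways to linearly order the cycles. The construction rests on a structural description of unimodal forests in terms of their top-down maxima.

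First I would prove the key structural lemma. Let $F$ be unimodal with set of top-down maxima $M$. The patterns $312$ and $213$ are exactly the two length-three patterns whose minimum lies in the middle, so a ``valley'' is forbidden along every root-to-vertex path; from this I would extract two facts. (i) If $v\in M$ is not a root, then its parent also lies in $M$ (otherwise the parent would be a valley between a larger ancestor and $v$), so $M$ induces a genuine subforest, which, being composed of top-down maxima, is increasing; call it the skeleton $I$. (ii) Every non-top-down-maximum vertex has only smaller children, so assigning to each $v\in M$ the block $D(v)$ consisting of $v$ together with all descendants reachable from $v$ without meeting another element of $M$ partitions $[n]$ into $|M|$ blocks, and the subtree induced on each $D(v)$ is a decreasing tree rooted at its maximum $v$.

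With the lemma in hand I would define $\Psi$ as follows. Each block $D(v)$, being a decreasing tree rooted at $\max D(v)=v$, corresponds to a cyclic arrangement of $D(v)$ under the standard bijection between decreasing trees on a set $S$ and cycles on $S$ (both enumerated by $(|S|-1)!$); this yields an unordered decomposition of $[n]$ into $|M|=k$ cycles, one per block. To order them I would read the skeleton: since $\varphi$ is a bijection between permutations of a finite ordered set and increasing forests on that set, $\varphi^{-1}(I)$ is a word on $M$, and the order in which the maxima appear in this word orders the corresponding cycles. The inverse map takes an ordered decomposition $(C_1,\dots,C_k)$, forms $\varphi$ of the word $m_1\cdots m_k$ of cycle-maxima $m_i=\max C_i$ to rebuild the increasing skeleton on $\{m_i\}$, converts each $C_i$ back into a decreasing tree on its support, and grafts the non-root part of that tree below $m_i$.

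The bulk of the work is verifying that $\Psi$ and its inverse are mutually inverse and land in the correct sets; here the valley argument is used again to check that the reconstructed forest is unimodal and that its top-down maxima are exactly $\{m_1,\dots,m_k\}$ (each grafted vertex sits below the larger ancestor $m_i$, hence is not a top-down maximum, while each $m_i$ exceeds all of its skeleton ancestors, and no path can leave a decreasing block and re-enter the skeleton). The main obstacle I anticipate is the structural lemma rather than the assembly: one must argue carefully that the skeleton is closed under taking parents and that each block is an entire decreasing subtree, since these are precisely the points where avoidance of both $312$ and $213$ — equivalently, unimodality of every root-to-vertex path — is essential. Once the lemma is in place, the bijection and the count $\sum_{k=1}^n k!\,c(n,k)$ follow.
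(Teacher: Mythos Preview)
Your proposal is correct and follows essentially the same bijection as the paper's proof, only described in the inverse direction: the paper builds the unimodal forest from an ordered cycle decomposition by applying $\varphi$ to the word of cycle-maxima to produce the increasing skeleton and then $\varphi_D$ to the remaining entries of each cycle to produce the decreasing subtree hung below its maximum, which is exactly your inverse map. Your write-up is in fact more careful than the paper's on the structural lemma (closure of the top-down maxima under taking parents, and the fact that each block $D(v)$ is a decreasing tree with maximum $v$), points the paper treats as evident.
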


\begin{proof}
We will construct a bijection, $\theta$, from the set of ordered cycle decompositions of permutations in $\S_n$ to unimodal forests. For some ordered cycle decomposition of a permutation $\pi\in\S_n$, write each cycle so that the maximum in each cycle is first. These maxima will correspond exactly to the top-down maxima of the corresponding forest. The top-down maxima in a unimodal forest constitute the largest increasing subforest of the forest. The order in which these cycles appear determines this increasing subforest by applying $\varphi$ to the ordered list of maxima. It remains to attach the remaining vertices. The elements in each cycle which are not the maximum are the descendants of that maximum in the forest. Use $\varphi_D$ on the remaining elements of each cycle to obtain an decreasing forest on those elements. Attach these to the maximum from that cycle. (See example in Figure~\ref{fig:unimodal}.)

%As stated there are $c(n,k)$ permutations of length $n$ which can be decomposed into $k$ cycles. We claim that for each such permutation, we can associate $k!$ trees (which each contain $k$ top-down maximums) to this permutation. Consider for example the permutation $\pi = (138)(2695)(4)(7)$. This permutation can be decomposed into 4 disjoint cycles. Write the cycles so that the maximum in each cycle is first. For this example, we get $\pi = (813)(9526)(4)(7)$. These maxima will correspond exactly to the top-down maxima of your tree. The top-down maxima in a unimodal tree constitute the largest increasing subtree of the tree. There are $k!$ increasing trees on $k$ vertices, and thus in this example, there are 24 possible increasing trees on $\{8,9,4,7\}$. It remains to attach the remaining vertices. The elements in each cycle which are not the maximum are the descendants of that maximum in the tree. For example, 1 and 3 will be descendants of 8 and 5, 2, and 6 are descendants of 9, while 4 and 7 have no descendants. These descendants must be decreasing. If there are $j$ descendants, there are $j!$ possible ways to arrange the descendants so that they are in decreasing order along each path. Similarly, there are $j!$ cycles containing $j+1$ elements (the maximum plus the descendants). In particular, there are $3! = 6$ possible cycles on $\{9,5,6,2\}$ and there are 6 possible trees with 9 as its root and $5,6,2$ as its descendants which appear in decreasing order. For this example, we would obtain the forest in Figure~\ref{fig:unimodal}.

This map is clearly invertible and thus each unimodal forest on $[n]$ with $k$ top-down maxima can be uniquely identified with a permutation on $[n]$ which can be decomposed into $k$ cycles, where the $k$ cycles appear in some order. Since a unimodal forest can have anywhere from 1 to $n$ top-down maxima and there are $k!$ ways to arrange the $k$ cycles, the result follows.
\end{proof}

\begin{figure}[h]
\centering
\includegraphics{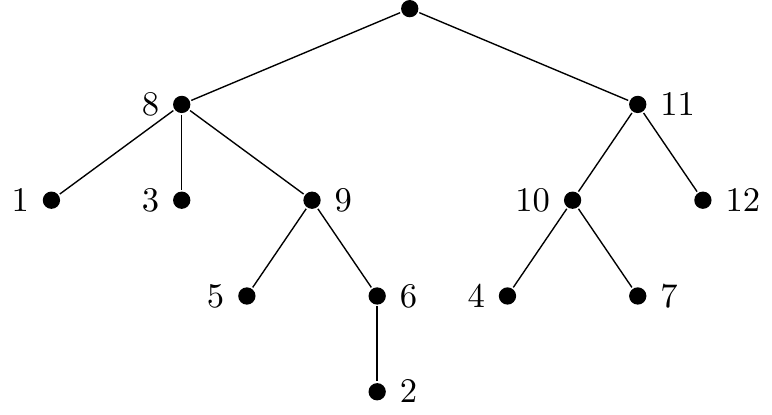}
  \caption{This example of a unimodal forest is realized as $\theta((11, 4, 10, 7)(12)(8,3,1)(9,5,2,6))$.}
  \label{fig:unimodal}
  \end{figure}

The forest in Figure~\ref{fig:unimodal} is obtained via the $\theta$ bijection from the following ordered cycle decomposition: $$(11, 4, 10, 7)(12)(8,3,1)(9,5,2,6).$$ Indeed, $8$, $9$, $11$, and $12$ are the maxima from each cycle and constitute the largest increasing subforest (of size 4) which one can obtain by applying $\varphi$ to the permutation $11, 12, 8, 9$. The remaining elements constitute decreasing trees (obtained via $\varphi$ and complementation) attached to each of these maxima. For example, we can obtain an increasing forest on $\{4, 10, 7\}$ by applying $\varphi$ to the permutation $4, 10, 7$. Take the complement of that forest (by sending 4 to 10 and 10 to 4) to obtain a decreasing forest. Finally, attach all trees in that decreasing forest to the node labeled 11.

\begin{corollary}
Let $1\leq k\leq n$. 
\begin{itemize}
\item The number of unimodal forests with exactly $k$ top-down maxima is equal to $k!c(n,k)$. 
\item The number of unimodal forests with exactly $m$ trees is equal to $$\sum_{k=m}^n  c(k,m) \,c(n,k).$$
\end{itemize}
\end{corollary}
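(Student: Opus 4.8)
The plan is to leverage the bijection $\theta$ established in Theorem~\ref{thm:unimodal}, under which a unimodal forest on $[n]$ with exactly $k$ top-down maxima corresponds to an ordered cycle decomposition of a permutation in $\S_n$ with exactly $k$ cycles. For the first claim, I would simply observe that for a fixed value of $k$, the unordered cycle decompositions are counted by $c(n,k)$, and there are $k!$ orderings of the $k$ cycles, so the number of ordered cycle decompositions with exactly $k$ cycles is $k!\,c(n,k)$. Since $\theta$ sends these bijectively to the unimodal forests with exactly $k$ top-down maxima, the count follows immediately. This is essentially extracting the summand from the proof of Theorem~\ref{thm:unimodal} rather than summing over all $k$.

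For the second claim, the key is to understand how the number of \emph{trees} in a unimodal forest $F$ is recovered from the corresponding ordered cycle decomposition. Under $\theta$, the $k$ cycle-maxima form the largest increasing subforest, realized by applying $\varphi$ to the ordered list of these maxima; the roots of $F$ are exactly the roots of this increasing subforest. Recall from the description of $\varphi$ that the roots of the increasing forest associated to a sequence are precisely its left-to-right minima. Thus the number of trees in $F$ equals the number of left-to-right minima among the ordered sequence of cycle-maxima. So I would reduce the problem to counting, among ordered cycle decompositions of permutations in $\S_n$ with $k$ cycles, those whose sequence of cycle-maxima has exactly $m$ left-to-right minima.

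The combinatorial heart is then the following count: fix the $k$ cycle-maxima and the partition of $[n]$ into cycles (contributing $c(n,k)$ once the underlying unordered cycle structure is chosen), and ask in how many of the $k!$ orderings of the cycles the sequence of maxima has exactly $m$ left-to-right minima. The maxima of the $k$ cycles are $k$ distinct values, and arranging $k$ distinct values so that the resulting sequence has exactly $m$ left-to-right minima is counted by the unsigned Stirling number of the first kind $c(k,m)$ (the number of permutations of $k$ elements with $m$ left-to-right minima equals the number with $m$ cycles, a classical fact via the fundamental bijection). Summing over all $k$ from $m$ to $n$, and multiplying the $c(k,m)$ orderings by the $c(n,k)$ choices of underlying cycle structure, yields $\sum_{k=m}^n c(k,m)\,c(n,k)$.

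The main obstacle I anticipate is justifying cleanly that the number of left-to-right minima in the sequence of cycle-maxima equals the number of trees, and then correctly separating the two independent choices in the count: the unordered cycle partition of $[n]$ into $k$ blocks (giving $c(n,k)$), versus the ordering of those blocks, whose contribution depends only on the relative order of the $k$ maxima and is governed by $c(k,m)$. Care is needed to confirm that the number of orderings with $m$ left-to-right minima depends only on $k$ and $m$ and not on the specific maxima values—this holds because only the relative order of the maxima matters—so that the two factors genuinely decouple and the product $c(k,m)\,c(n,k)$ is correct before summing over $k$.
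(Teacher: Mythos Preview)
Your proposal is correct and follows essentially the same approach as the paper: the first claim is the $k$th summand from Theorem~\ref{thm:unimodal}, and for the second claim the paper likewise observes that the number of trees equals the number of roots in the increasing subforest on the $k$ cycle-maxima, which via $\varphi$ equals the number of left-to-right minima of the ordered maxima, counted by $c(k,m)$. Your write-up is somewhat more explicit about the decoupling of the $c(n,k)$ and $c(k,m)$ factors, but the argument is the same.
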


\begin{proof}
The first statement follows directly from the proof of Theorem~\ref{thm:unimodal}. 
The second statement follows from the fact that there are exactly $c(k,m)$ increasing forests on $[k]$ with $m$ trees (since the $m$ roots correspond exactly to the left-to-right minima of the corresponding permutation and it is well-known that the unsigned Stirling numbers of the first kind count permutations by number of left-to-right minima). 
\end{proof}

By Proposition~\ref{prop:comp} and Theorem~\ref{thm:unimodal}, we obtain the following corollary pertaining to the forests that are \textit{reverse-unimodal}, i.e. if along each path from the root to a vertex, the labels are decreasing, then increasing. 
\begin{corollary}\label{cor:V}
For $n\geq 1$, $$f_n(132, 231) = \sum_{k=1}^n k! \,c(n,k).$$
\end{corollary}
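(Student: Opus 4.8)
The plan is to derive this corollary directly from the two results it cites, so the only real work is to verify that complementation carries the pattern set $\{312, 213\}$ onto $\{132, 231\}$. First I would compute the complements explicitly: since the complement of a pattern $\sigma\in\S_3$ is given by $\sigma_i^c = 4 - \sigma_i$, we have $312^c = 132$ and $213^c = 231$, and hence $\{312, 213\}^c = \{132, 231\}$.

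With this identification in hand, the second step is to invoke Proposition~\ref{prop:comp}, which asserts that a forest $F$ avoids a set of patterns if and only if its complement $F^c$ avoids the set of complemented patterns, so that $f_n(\sigma_1, \ldots, \sigma_m) = f_n(\sigma_1^c, \ldots, \sigma_m^c)$. Applying this to the set $\{312, 213\}$ gives
$$f_n(132, 231) = f_n(312^c, 213^c) = f_n(312, 213).$$
Finally I would apply Theorem~\ref{thm:unimodal}, which evaluates $f_n(312, 213)$ as $\sum_{k=1}^n k!\,c(n,k)$, to obtain the stated formula.

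Because both ingredients are already established, there is no substantive obstacle here; the statement is a purely formal consequence. The only points requiring genuine care are the complement computation above and the interpretive remark that the forests counted are exactly the \emph{reverse-unimodal} forests, those whose root-to-vertex paths are decreasing then increasing. This last description follows because complementing a unimodal (increasing-then-decreasing) path produces a decreasing-then-increasing path, so that the complement bijection $F\mapsto F^c$ of Proposition~\ref{prop:comp} maps unimodal forests exactly onto reverse-unimodal ones, matching the pattern sets computed above.
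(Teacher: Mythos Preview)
Your proposal is correct and follows exactly the paper's own approach: the paper simply states that the corollary follows from Proposition~\ref{prop:comp} (complementation) and Theorem~\ref{thm:unimodal}, together with the observation that the resulting forests are reverse-unimodal. Your explicit verification that $312^c=132$ and $213^c=231$ is the only computation needed, and you have it right.
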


\section{Unimodal forests avoiding one extra pattern}

In this section, we consider the forests avoiding the set $\{213, 312\}$ and one additional pattern of length 3. There are four such sets:  $\{213, 312,123\}$, $\{213, 312, 321\}$, $\{213, 312,132\}$, and $\{213, 312, 231\}$. We also find an enumeration for the set of permutations that avoid the patterns in the set $\{321, 132, 213\}$ by a very similar argument to those that avoid $\{213, 312,132\}$. 

Notice that the forests avoiding the set of patterns $\{213, 312, 123\}$ are exactly unimodal forests for which the increasing part of any unimodal path from the root to a vertex is at most length 2. Similarly, forests avoiding the set of patterns $\{213, 312, 321\}$ are exactly unimodal forests for which the decreasing part of any unimodal path from the root to a vertex is at most length 2. Unlike the corresponding permutations which avoid these sets, these sets of forests are not equinumerous. 

We start with a lemma regarding the height of an increasing forest. We say a forest has \textit{height} $h$ if the longest labeled path from the root of the forest to any vertex is of length $h$. For example, the forest in Figure~\ref{fig:unimodal} has height 4 since the longest path,  $8-9-6-2$, is of length 4. Recall that $\mathcal{B}(n)$ denotes the $n^{\text{th}}$ Bell number, counting the number of set partitions of $[n]$, and $S(n,k)$ denotes the Stirling number of the second kind, counting the number of ways to partition $[n]$ into $k$ non-empty subsets. 

\begin{lemma}\label{lem:height2}
For $n\geq1$, there are $\mathcal{B}(n)$ increasing forests on $[n]$ of height at most~2. There are $S(n,k)$ such forests with exactly $k$ trees.
\end{lemma}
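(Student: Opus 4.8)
The plan is to construct an explicit bijection between increasing forests on $[n]$ of height at most 2 and set partitions of $[n]$, under which the number of trees corresponds to the number of blocks. Both assertions then follow at once, since $\mathcal{B}(n)$ counts all set partitions of $[n]$ and $S(n,k)$ counts those with exactly $k$ blocks.

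First I would pin down the structure of such a forest. Since height is measured by the number of vertices along a root-to-vertex path, a forest has height at most 2 precisely when no vertex has a grandchild; that is, every tree consists of a root together with a (possibly empty) collection of children, all of which are leaves. In an increasing forest every vertex exceeds each of its ancestors, so each child exceeds its root, the root being the child's only ancestor. Consequently the root of each tree is the minimum element of that tree's vertex set. Because the forests are unordered, there is no further choice to be made: a tree is completely determined by its vertex set, with the minimum as root and every other element attached directly as a leaf.

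Next I would define the bijection. To a set partition of $[n]$ I associate the forest whose trees are indexed by the blocks, each block giving the tree with its minimum element as root and its remaining elements as leaf children. This forest is increasing (each leaf exceeds the block minimum) and has height at most 2, and its number of trees equals the number of blocks. Conversely, given an increasing forest of height at most 2, the vertex sets of its trees form a set partition of $[n]$; by the structural description above, this partition recovers the forest under the forward map. Hence the two maps are mutually inverse.

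The conclusion is then immediate: restricting to set partitions with exactly $k$ blocks yields the $S(n,k)$ forests with exactly $k$ trees, and summing over $k$ gives the total count $\mathcal{B}(n)$. There is no serious obstacle here beyond the structural observation itself; the only point requiring care is to confirm that the increasing condition, together with the fact that the forests are unordered, leaves no freedom in how a block is realized as a tree, so that the correspondence is genuinely a bijection rather than many-to-one.
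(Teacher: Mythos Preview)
Your proposal is correct and follows essentially the same approach as the paper: both construct the bijection that sends a set partition to the forest whose trees are the blocks with the block minimum as root and the remaining elements as its children. You give a more careful justification of invertibility than the paper (which simply declares the map ``clearly invertible''), but the underlying argument is identical.
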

\begin{proof}
We will construct a bijection, $\beta$, from the set of set partitions of $[n]$ to the set of increasing forests on $[n]$ of height at most 2. Let $P$ be a set partition of $[n]$. For each set $p_i\in P$, let the minimum element $j$ be a root of a tree in the forest and let all other elements of $p_i$ be children of $j$. Each part of the partition $P$ corresponds to a tree in the forest. This is clearly invertible.
\end{proof}

We will use this lemma to enumerate both sets of forests. 

\begin{theorem}\label{thm:uni - 123}
For $n\geq 1$, $$f_n(312, 213, 123) =  \sum_{k=1}^n \mathcal{B}(k)c(n,k).$$
\end{theorem}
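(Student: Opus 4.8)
The plan is to restrict the bijection $\theta$ from Theorem~\ref{thm:unimodal} to the subclass cut out by the extra pattern $123$. Recall that $\theta$ identifies each unimodal forest on $[n]$ with $k$ top-down maxima with a permutation of $[n]$ decomposable into $k$ cycles, together with an ordering of those cycles: the cycle maxima become the top-down maxima of the forest, the increasing subforest on these maxima is recovered by applying $\varphi$ to the ordered list of maxima, and the remaining elements of each cycle form a decreasing tree (via $\varphi_D$) hanging from the corresponding maximum. The strategy is to determine exactly which of the $k!$ orderings of the cycles survive the additional constraint of avoiding $123$, and to show this count depends only on $k$.

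First I would translate the pattern condition into a structural one. Along any root-to-vertex path the labels are unimodal (increasing then decreasing), and the top-down maxima on that path are precisely the elements of its increasing segment, since every element after the peak is smaller than the peak and hence not a top-down maximum. Consequently an occurrence of $123$ can only arise from three top-down maxima in increasing order along a single path, so a unimodal forest avoids $123$ if and only if the increasing segment of every path has length at most $2$, which is to say the subforest of top-down maxima has height at most $2$. This is the key observation that connects the $123$-avoidance to the height-$2$ condition studied in Lemma~\ref{lem:height2}.

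Next I would count. For a fixed unordered cycle decomposition into $k$ cycles (there are $c(n,k)$ of these), the set of $k$ maxima and the decreasing trees attached to them are completely determined; only the ordering of the cycles remains free, and through $\varphi$ the $k!$ orderings produce exactly the $k!$ increasing forests on the $k$-element set of maxima. The forests in $F_n(312,213,123)$ are therefore exactly those for which this increasing forest on the maxima has height at most $2$. By Lemma~\ref{lem:height2}, the number of such increasing forests on a set of size $k$ is $\mathcal{B}(k)$, independent of which elements the maxima are. Hence the number of forests in $F_n(312,213,123)$ with exactly $k$ top-down maxima is $\mathcal{B}(k)\,c(n,k)$, and summing over $1\leq k\leq n$ yields the claimed formula.

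The main obstacle is the structural translation in the second paragraph: one must argue carefully that no occurrence of $123$ can be created using non-maximal vertices. This follows because the non-maximal vertices on any path lie strictly after the peak and are strictly decreasing and bounded above by the peak, so no increasing triple can use any of them; every increasing triple is forced to lie entirely within the top-down maxima. Once this is established, the restriction of $\theta$ and the application of Lemma~\ref{lem:height2} are routine, since the decreasing-tree attachments are unaffected by the $123$ constraint and the count factors cleanly as $\mathcal{B}(k)\,c(n,k)$.
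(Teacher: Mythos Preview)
Your proof is correct and follows essentially the same approach as the paper: both restrict the bijection $\theta$ from Theorem~\ref{thm:unimodal} by observing that the extra $123$-avoidance condition is equivalent to the increasing subforest of top-down maxima having height at most~$2$, and then invoke Lemma~\ref{lem:height2} to replace the factor $k!$ by $\mathcal{B}(k)$. The paper packages this as a direct bijection $\xi$ from ``partitioned cycle decompositions'' (using the map $\beta$ of Lemma~\ref{lem:height2} in place of $\varphi$ on the maxima), but the underlying idea is identical, and your explicit justification that no $123$ occurrence can use a non-top-down-maximum is a welcome addition.
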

\begin{proof}
We will construct a bijection, $\xi$, from the set of partitioned cycle decompositions of permutations in $\S_n$ to unimodal forests that avoid 123. For some partitioned cycle decomposition of a permutation $\pi\in\S_n$, write each cycle so that the maximum in each cycle is first. These maxima will correspond exactly to the top-down maxima of the corresponding forest. 
%The top-down maxima in a unimodal forest constitute the largest increasing subforest of the forest. 
%Since these unimodal forests avoid 123, the largest increasing subforest has height at most 2. 
The partition of these cycles determines this increasing subforest by applying $\beta$ (defined in the proof of Lemma~\ref{lem:height2}) to the partition of the maxima. This gives us an increasing forests on the set of maxima of height at most 2. It remains to attach the remaining vertices. The elements in each cycle which are not the maximum are the descendants of that maximum in the forest. Use $\varphi_D$ on the remaining elements of each cycle to obtain an decreasing forest on those elements. Attach these to the maximum from that cycle. Since the largest increasing subforest has height at most two, the unimodal forest obtained avoids 123.  

%As stated there are $c(n,k)$ permutations of length $n$ which can be decomposed into $k$ cycles. We claim that for each such permutation, we can associate $k!$ trees (which each contain $k$ top-down maximums) to this permutation. Consider for example the permutation $\pi = (138)(2695)(4)(7)$. This permutation can be decomposed into 4 disjoint cycles. Write the cycles so that the maximum in each cycle is first. For this example, we get $\pi = (813)(9526)(4)(7)$. These maxima will correspond exactly to the top-down maxima of your tree. The top-down maxima in a unimodal tree constitute the largest increasing subtree of the tree. There are $k!$ increasing trees on $k$ vertices, and thus in this example, there are 24 possible increasing trees on $\{8,9,4,7\}$. It remains to attach the remaining vertices. The elements in each cycle which are not the maximum are the descendants of that maximum in the tree. For example, 1 and 3 will be descendants of 8 and 5, 2, and 6 are descendants of 9, while 4 and 7 have no descendants. These descendants must be decreasing. If there are $j$ descendants, there are $j!$ possible ways to arrange the descendants so that they are in decreasing order along each path. Similarly, there are $j!$ cycles containing $j+1$ elements (the maximum plus the descendants). In particular, there are $3! = 6$ possible cycles on $\{9,5,6,2\}$ and there are 6 possible trees with 9 as its root and $5,6,2$ as its descendants which appear in decreasing order. For this example, we would obtain the forest in Figure~\ref{fig:unimodal}.

This map is clearly invertible and thus each unimodal forest on $[n]$ which avoids 123 with $k$ top-down maxima can be uniquely identified with a permutation on $[n]$ which can be decomposed into $k$ cycles, where the $k$ cycles are partitioned into subsets. Since a unimodal forest can have anywhere from 1 to $n$ top-down maxima and there are $\mathcal{B}(k)$ ways to partition the $k$ cycles, the result follows.
\end{proof}

\begin{theorem}\label{thm:uni -321}
For $n\geq 1$, $$f_n(312, 213, 321) = \sum_{k=1}^n k!S(n,k).$$
\end{theorem}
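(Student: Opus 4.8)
The plan is to adapt the bijection $\theta$ from the proof of Theorem~\ref{thm:unimodal} in exactly the way that is dual to the adaptation used for Theorem~\ref{thm:uni - 123}. There, the freedom on the \emph{maxima} was restricted (ordered cycles were replaced by partitioned cycles, so that the increasing subforest of top-down maxima had height at most $2$); here I will instead restrict the freedom in the \emph{decreasing trees} hanging below each top-down maximum. First I would record the structural characterization: a unimodal forest avoids $321$ if and only if along every path the decreasing portion has length at most $2$, which is equivalent to saying that the pendant decreasing tree attached to each top-down maximum has height at most $1$ --- that is, every non-top-down maximum is a leaf whose parent is a top-down maximum.

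Granting this, I would examine when a forest in the image of $\theta$ satisfies the condition. A cycle is written with its maximum $M$ first, followed by its remaining entries $x_1, x_2, \ldots, x_t$ in cyclic order, and $\varphi_D$ is applied to $x_1 x_2 \cdots x_t$ to build the decreasing tree below $M$. Since $\varphi_D$ is the complement of $\varphi$ and complementation preserves the shape of a forest, this decreasing tree has height at most $1$ exactly when $\varphi(x_1 \cdots x_t)$ consists of isolated vertices, i.e. exactly when every $x_i$ is a left-to-right minimum, i.e. exactly when $x_1 > x_2 > \cdots > x_t$. Thus the admissible cycles are precisely those whose non-maximal entries appear in decreasing order, and for each underlying set there is exactly one such cycle. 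Consequently $\theta$ restricts to a bijection between the ordered set partitions of $[n]$ and the forests in $F_n(312, 213, 321)$: a block $B_i$ contributes the top-down maximum $\max B_i$, the ordered list of these maxima is fed through $\varphi$ to produce the increasing subforest, and the remaining elements of each block are attached as leaf children of their maximum.

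To finish, I would verify invertibility directly (recover the ordered sequence of maxima by applying $\varphi^{-1}$ to the increasing subforest of top-down maxima, then read off each maximum together with its leaf children to recover the ordered blocks) and confirm that the constructed forest indeed avoids $312$, $213$, and $321$. Finally, for the count: a forest with $k$ top-down maxima corresponds to an ordered set partition of $[n]$ into $k$ blocks, of which there are $k!\,S(n,k)$, and summing over $1 \le k \le n$ gives the claimed total.

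The main obstacle is the structural characterization in the first paragraph --- specifically, arguing that within a unimodal path (increasing up to a peak, then decreasing) every occurrence of $321$ can be witnessed by the peak together with two elements of the decreasing part, so that avoiding $321$ is exactly the height-$\le 1$ condition on each pendant decreasing tree. This is what collapses the arbitrary cycle data of Theorem~\ref{thm:unimodal} (counted by $c(n,k)$) down to set-partition data (counted by $S(n,k)$), mirroring how Theorem~\ref{thm:uni - 123} collapsed the ordering data to partition data; once it is in place the rest is a routine check that $\theta$ restricts to the stated bijection.
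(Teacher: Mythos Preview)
Your proposal is correct and arrives at precisely the same bijection the paper calls $\gamma$: an ordered set partition is sent to a unimodal forest by applying $\varphi$ to the ordered list of block maxima to build the increasing subforest, and then attaching the remaining block elements as leaf children of their maxima. The only difference is packaging: the paper defines $\gamma$ directly, while you derive it by restricting $\theta$ from Theorem~\ref{thm:unimodal} and arguing that the admissible cycles collapse to sets---a pleasant way to see why $c(n,k)$ becomes $S(n,k)$, but not a different argument.
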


\begin{proof}
We will construct a bijection, $\gamma$, from the set of ordered partitions of $[n]$ to unimodal forests that avoid 321. For some ordered partition of $[n]$ into $k$ parts, let $j_i$ be the maximum element of the $i$th part. These maxima will correspond exactly to the top-down maxima of the corresponding forest. The top-down maxima in a unimodal forest constitute the largest increasing subforest of the forest. The order in which these parts appear in the partition determines this increasing subforest by applying $\varphi$ to the ordered list of maxima. It remains to attach the remaining vertices. The elements in each part which are not the maximum are the children of that maximum in the forest.

%As stated there are $c(n,k)$ permutations of length $n$ which can be decomposed into $k$ cycles. We claim that for each such permutation, we can associate $k!$ trees (which each contain $k$ top-down maximums) to this permutation. Consider for example the permutation $\pi = (138)(2695)(4)(7)$. This permutation can be decomposed into 4 disjoint cycles. Write the cycles so that the maximum in each cycle is first. For this example, we get $\pi = (813)(9526)(4)(7)$. These maxima will correspond exactly to the top-down maxima of your tree. The top-down maxima in a unimodal tree constitute the largest increasing subtree of the tree. There are $k!$ increasing trees on $k$ vertices, and thus in this example, there are 24 possible increasing trees on $\{8,9,4,7\}$. It remains to attach the remaining vertices. The elements in each cycle which are not the maximum are the descendants of that maximum in the tree. For example, 1 and 3 will be descendants of 8 and 5, 2, and 6 are descendants of 9, while 4 and 7 have no descendants. These descendants must be decreasing. If there are $j$ descendants, there are $j!$ possible ways to arrange the descendants so that they are in decreasing order along each path. Similarly, there are $j!$ cycles containing $j+1$ elements (the maximum plus the descendants). In particular, there are $3! = 6$ possible cycles on $\{9,5,6,2\}$ and there are 6 possible trees with 9 as its root and $5,6,2$ as its descendants which appear in decreasing order. For this example, we would obtain the forest in Figure~\ref{fig:unimodal}.

This map is clearly invertible and thus each unimodal forest on $[n]$ which avoids 321 with $k$ top-down maxima can be uniquely identified with a partition on $[n]$ into $k$ parts, where the $k$ parts appear in some order. Since a unimodal forest can have anywhere from 1 to $n$ top-down maxima and there are $k!$ ways to arrange the $k$ parts, the result follows.
\end{proof}

By Proposition~\ref{prop:comp}, Theorem \ref{thm:uni - 123}, and Theorem~\ref{thm:uni -321}, we obtain the following corollary.
\begin{corollary}\label{cor:unimodal plus}
For $n\geq 1$, $$f_n(132, 231, 321) = \sum_{k=1}^n \mathcal{B}(k)c(n,k) \, \, \, \, \, \text{ and } \, \, \, \, \, f_n(132, 231, 123) = \sum_{k=1}^n k!S(n,k).$$
\end{corollary}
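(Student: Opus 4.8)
The plan is to observe that both identities follow immediately from the complementation symmetry established in Proposition~\ref{prop:comp}, applied to the two preceding theorems. Thus the entire argument reduces to a pattern-complement computation together with correct bookkeeping about which set of patterns matches which enumeration.

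First I would compute the complement of each length-$3$ pattern appearing in the two theorems. Since $\pi_i^c = 4 - \pi_i$ for $\pi \in \S_3$, one checks directly that $312^c = 132$, $213^c = 231$, $123^c = 321$, and $321^c = 123$. Consequently, complementing each pattern in the set $\{312, 213, 123\}$ termwise yields exactly the set $\{132, 231, 321\}$, while complementing each pattern in $\{312, 213, 321\}$ yields exactly $\{132, 231, 123\}$.

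Next I would invoke Proposition~\ref{prop:comp}, which asserts $f_n(\sigma_1, \ldots, \sigma_m) = f_n(\sigma_1^c, \ldots, \sigma_m^c)$. Applying this with $(\sigma_1, \sigma_2, \sigma_3) = (312, 213, 123)$ gives $f_n(132, 231, 321) = f_n(312, 213, 123)$, and Theorem~\ref{thm:uni - 123} identifies the right-hand side as $\sum_{k=1}^n \mathcal{B}(k)\,c(n,k)$, which establishes the first identity. Applying the proposition instead with $(\sigma_1, \sigma_2, \sigma_3) = (312, 213, 321)$ gives $f_n(132, 231, 123) = f_n(312, 213, 321)$, and Theorem~\ref{thm:uni -321} identifies this as $\sum_{k=1}^n k!\,S(n,k)$, establishing the second identity.

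The only point requiring care — essentially bookkeeping rather than a genuine obstacle — is pairing each complemented set with the correct theorem. Because the set containing $123$ complements to the set containing $321$ and vice versa, I must ensure that $\{132, 231, 321\}$ is matched with the Bell-number enumeration of $\{312, 213, 123\}$ and that $\{132, 231, 123\}$ is matched with the Stirling-number enumeration of $\{312, 213, 321\}$, rather than the reverse. Once this correspondence is fixed, the result follows with no further computation.
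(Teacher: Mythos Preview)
Your proposal is correct and follows exactly the paper's approach: the corollary is derived in the paper by citing Proposition~\ref{prop:comp} (complementation) together with Theorems~\ref{thm:uni - 123} and~\ref{thm:uni -321}, which is precisely what you do. Your explicit verification of the complements and the careful pairing of each complemented set with the appropriate theorem simply spell out what the paper leaves implicit.
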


We now consider the forests avoiding all permutations in the set $\{312, 213, 132\}$. %Let $c(n)$ denote such trees. 
Notice that permutations that avoid these sets are unimodal where all elements of the increasing part are greater than those of the decreasing part. They are exactly the permutations that can be drawn on Figure~\ref{fig:312, 213, 132}.

\begin{figure}
\centering
\begin{subfigure}[b]{0.24\textwidth}
\centering
\begin{tikzpicture}
\draw (0,0) --(2,0) [thick, color=black!20] {};
\draw (0,0) --(0,2) [thick, color=black!20] {};
\draw (0,2) --(2,2) [thick, color=black!20] {};
\draw (2,0) --(2,2) [thick, color=black!20] {};
\draw (1,0) --(1,2) [thick, color=black!20] {};
\draw (0,1) --(2,1) [thick, color=black!20] {};
\draw (0,1) --(1,2) [thick] {};
\draw (1,1) --(2,0) [thick] {};
\end{tikzpicture}
\subcaption{$\{312, 213, 132\}$}
\label{fig:312, 213, 132}
\end{subfigure}
\begin{subfigure}[b]{0.24\textwidth}
\centering
\begin{tikzpicture}
\draw (0,0) --(2,0) [thick, color=black!20] {};
\draw (0,0) --(0,2) [thick, color=black!20] {};
\draw (0,2) --(2,2) [thick, color=black!20] {};
\draw (2,0) --(2,2) [thick, color=black!20] {};
\draw (1,0) --(1,2) [thick, color=black!20] {};
\draw (0,1) --(2,1) [thick, color=black!20] {};
\draw (0,1) --(1,0) [thick] {};
\draw (1,1) --(2,2) [thick] {};
\end{tikzpicture}
\subcaption{$\{132, 231, 312\}$}
\label{fig:132, 231, 312}
\end{subfigure}\begin{subfigure}[b]{0.24\textwidth}
\centering
\begin{tikzpicture}
\draw (0,0) --(2,0) [thick, color=black!20] {};
\draw (0,0) --(0,2) [thick, color=black!20] {};
\draw (0,2) --(2,2) [thick, color=black!20] {};
\draw (2,0) --(2,2) [thick, color=black!20] {};
\draw (1,0) --(1,2) [thick, color=black!20] {};
\draw (0,1) --(2,1) [thick, color=black!20] {};
\draw (0,1) --(1,2) [thick] {};
\draw (1,0) --(2,1) [thick] {};
\end{tikzpicture}
\subcaption{$\{321, 132, 213\}$}
\label{fig:321, 132, 213}
\end{subfigure}\begin{subfigure}[b]{0.24\textwidth}
\centering
\begin{tikzpicture}
\draw (0,0) --(2,0) [thick, color=black!20] {};
\draw (0,0) --(0,2) [thick, color=black!20] {};
\draw (0,2) --(2,2) [thick, color=black!20] {};
\draw (2,0) --(2,2) [thick, color=black!20] {};
\draw (1,0) --(1,2) [thick, color=black!20] {};
\draw (0,1) --(2,1) [thick, color=black!20] {};
\draw (0,1) --(1,0) [thick] {};
\draw (1,2) --(2,1) [thick] {};
\end{tikzpicture}
\subcaption{$\{123, 312, 231\}$}
\label{fig:123, 312, 231}
\end{subfigure}
\caption{ Permutations that avoid the set of patterns listed are exactly those that can be drawn on the corresponding figure. (For more on permutations that can be drawn on figures, see \cite{AABRV13}.) }
\label{fig:3}
\end{figure}

\begin{theorem}\label{thm:uni plus}
For $n\geq 1$, $$f_n(312, 213, 132) = n! \sum_{k=1}^n \frac{1}{k!}{{n-1}\choose{k-1}}.$$
\end{theorem}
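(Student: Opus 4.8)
The plan is to first pin down the combinatorial structure of a forest $F\in F_n(312,213,132)$. Since avoiding $312$ and $213$ already forces each root-to-vertex path to be unimodal, I would isolate the extra condition imposed by also avoiding $132$. Writing such a path as $v_0<v_1<\cdots<v_s>w_1>\cdots>w_t$, where $v_0,\ldots,v_s$ are the top-down maxima and $w_1,\ldots,w_t$ are not, I claim that avoiding $132$ is equivalent to requiring every $w_i$ to be smaller than the first vertex $v_0$ on the path, i.e.\ every non-top-down-maximum is smaller than the root of its tree. Indeed, if some $w_i$ exceeded $v_0$, then since $w_i<v_s$ there would be a consecutive pair $v_a<w_i<v_{a+1}$ forming a $132$; conversely, if all $w_i<v_0$ the entire decreasing run lies below the entire increasing run and a short case check shows no $132$ can appear. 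This is exactly the per-path picture of Figure~\ref{fig:312, 213, 132}. I would record the clean consequence: in each tree with root $r$, every vertex exceeding $r$ is a top-down maximum and every vertex below $r$ is not, so $r$ is simultaneously the smallest top-down maximum of its tree and larger than all of its non-maxima.

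Next I would count the valid trees on an arbitrary $m$-element label set (the count depends only on $m$ by order-isomorphism); call this number $g(m)$ and show $g(m)=m!$. Fixing the rank $r$ of the root, the $s=m-r+1$ vertices of value $\ge r$ must form an increasing tree rooted at their minimum, contributing $(s-1)!$ choices, while the $r-1$ smaller vertices hang off the tree as decreasing subtrees with each child smaller than its parent. Inserting these smaller vertices in decreasing order, the insertion for a value $v$ may sit below any currently present vertex of larger value, giving the product $s(s+1)\cdots(m-1)=(m-1)!/(m-r)!$. Hence each root rank contributes $(s-1)!\cdot(m-1)!/(m-r)!=(m-1)!$ trees, and summing over the $m$ possible root ranks yields $g(m)=m\cdot(m-1)!=m!$; I would sanity-check this against small cases such as $g(3)=6$. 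More in keeping with the paper's bijective style, I would package this as a bijection sending each valid tree on a set $A$ to a linear order (list) of $A$, so that valid trees on $A$ correspond to the $|A|!$ lists of $A$.

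Finally, since a forest is an unordered collection of trees whose label sets partition $[n]$, with the internal structure of each tree chosen independently, the tree-to-list correspondence extends to a bijection between forests in $F_n(312,213,132)$ with exactly $k$ trees and partitions of $[n]$ into $k$ lists. The number of partitions of $[n]$ into $k$ lists is the unsigned Lah number $\tfrac{n!}{k!}\binom{n-1}{k-1}$ (choose a set partition into $k$ blocks, then linearly order each block), so summing over $k$ gives
$$f_n(312,213,132)=\sum_{k=1}^n \frac{n!}{k!}\binom{n-1}{k-1}=n!\sum_{k=1}^n \frac{1}{k!}\binom{n-1}{k-1},$$
as claimed. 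I expect the main obstacle to be the structural characterization in the first step: correctly recognizing that $132$-avoidance collapses to the single condition ``every non-maximum lies below the root,'' and carefully verifying both directions, in particular that attaching the decreasing subtrees in any decreasing fashion never reintroduces a forbidden pattern (the only relevant comparisons occur along single root-to-vertex paths, so cross-branch configurations need not be checked). Once the characterization is secure, the attachment product and the Lah-number identity are routine.
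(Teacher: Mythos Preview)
Your proposal is correct and follows essentially the same route as the paper: show that the valid trees on an $m$-set number $m!$ by a sequential-attachment count, then pass to forests via partitions of $[n]$ into lists and invoke the Lah numbers $\tfrac{n!}{k!}\binom{n-1}{k-1}$. The paper organizes the tree count as a single insertion sequence $r+1,\ldots,n,r-1,\ldots,1$ and then records an explicit bijection with permutations, whereas you first build the increasing subtree on the $s$ top-down maxima and then insert the smaller labels; the arithmetic is the same product $(s-1)!\cdot s(s+1)\cdots(m-1)=(m-1)!$. Your explicit formulation and verification of the structural lemma (``$132$-avoidance on a unimodal path $\Leftrightarrow$ every non-top-down-maximum lies below the root'') is a genuine improvement in rigor over the paper, which only gestures at Figure~\ref{fig:312, 213, 132} for this point.
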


\begin{proof}
First, consider a single tree with this property. If the root is $r$, the label $r+1$ must be a child of the root and thus there is one place to add it. Next, $r+2$ can be attached as a child of either $r$ or $r+1$. We continue to build the tree by attaching the remaining labeled vertices in order $r+1, r+2, r+3, \ldots, n, r-1, r-2, \ldots, 2, 1$. There are clearly $(n-1)!$ ways to do this. Since $r$ was chosen arbitrarily, there are exactly $n!$ trees which avoid the set of permutations $\{312, 213, 132\}$.

Further, we can construct an explicit bijection between these trees and permutations. Given a permutation $\pi \in \S_n$, let $\pi_1=r$ be the root of the corresponding tree. For any other element $i$ in the permutation, if $i>r$, let $i$ be a child of the rightmost element $j$ to the left of $i$ so that $i>j\geq r$. If $i<r$, let $i$ be a child of the rightmost element $j$ to the left of $i$ so that $i<j$. The elements greater than $r$ will appear in increasing order, and the elements smaller than $r$ will appear in decreasing order. Further, an element greater than $r$ will never be the child of an element smaller than $r$, and so the requirements are satisfied. To invert this process, write the tree with all children greater than $r$ to the left of all children less than $r$. All children greater than $r$ should appear in increasing order, and to the right of those, all children less than $r$ should appear in decreasing order. Read the labels while traversing the tree clockwise starting at the root. 

Since trees satisfying this property are in bijection with permutations, forests that avoid these permutations are in bijection with partitions of $[n]$ into ordered lists. To obtain such a partition into $k$ parts, we can take a permutation $\pi$ together with a composition $(\lambda_1, \ldots, \lambda_k)$ of $n$ into $k$ parts. For example, if we have permutation $\pi = 35216748$ and composition $(3,1,2,2)$, then we obtain the partition of $[8]$ into lists $352, 1, 67, 48$. The order in which these lists appear does not matter (thus we divide by $k!$), only the order within each list matters. Each list corresponds to a tree in the forest via the bijection described above and thus from a partition into $k$ parts, we obtain a forest on $k$ trees.
\end{proof}

Denote by $\tau$ the bijection described in the proof of Theorem~\ref{thm:uni plus} from partitions of $[n]$ into lists to forests avoiding $\{312, 213, 132\}$. An example can be seen in Figure~\ref{fig:unimodal plus}.

\begin{figure}[h]
\centering
\includegraphics{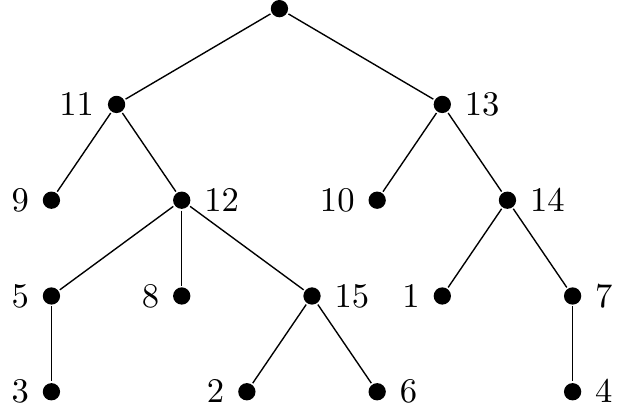}
  \caption{This example is $\tau(\{11, 9, 12, 5, 3, 8, 15, 2, 6\}\{13, 10, 14, 1, 7, 4\})$. }
  \label{fig:unimodal plus}
  \end{figure}
  
  The following corollary follows directly from the proof of Theorem~\ref{thm:uni plus}. 
  \begin{corollary}
  Let $1\leq k\leq n$. The number of forests on $[n]$ that avoid the permutations 312, 213, and 132 and have exactly $k$ trees is equal to $\frac{n!}{k!}{{n-1}\choose{k-1}}$.
  \end{corollary}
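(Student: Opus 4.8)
The plan is to read the result straight off the bijection $\tau$ constructed in the proof of Theorem~\ref{thm:uni plus}. Recall that $\tau$ matches forests on $[n]$ avoiding $\{312, 213, 132\}$ with partitions of $[n]$ into unordered lists, in such a way that each tree of the forest corresponds to exactly one list. Hence a forest has exactly $k$ trees if and only if its $\tau$-image is a partition of $[n]$ into exactly $k$ nonempty lists, and it suffices to count partitions of $[n]$ into exactly $k$ lists.

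To carry this out I would first count the \emph{ordered} partitions of $[n]$ into $k$ lists, i.e. sequences $(L_1, \ldots, L_k)$ of nonempty lists whose underlying sets partition $[n]$. Such a sequence is equivalent to the data of a permutation $\pi \in \S_n$ together with a composition $(\lambda_1, \ldots, \lambda_k)$ of $n$ into $k$ parts: writing $\pi$ in one-line notation and cutting it after positions $\lambda_1,\ \lambda_1+\lambda_2,\ \ldots$ produces the lists $L_1, \ldots, L_k$, and this correspondence is plainly reversible. Since there are $n!$ permutations and, as noted in Section~\ref{sec:prelim}, exactly ${{n-1}\choose{k-1}}$ compositions of $n$ into $k$ parts, there are $n!\,{{n-1}\choose{k-1}}$ ordered partitions of $[n]$ into $k$ lists. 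Dividing by $k!$ to forget the order of the lists yields $\frac{n!}{k!}{{n-1}\choose{k-1}}$ as the number of forests with exactly $k$ trees, which is the claim.

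The only point that genuinely requires care — and the step I would flag as the main obstacle — is verifying that the division by $k!$ is exact, i.e. that each unordered partition is obtained from precisely $k!$ ordered ones with no collisions among the $k!$ reorderings. This holds because the $k$ lists in a partition of $[n]$ are pairwise disjoint and nonempty, so no two of them coincide as lists; consequently the $k!$ orderings of a fixed unordered partition give $k!$ genuinely distinct sequences $(L_1, \ldots, L_k)$, and the forgetful map from ordered to unordered partitions is exactly $k!$-to-one. Once this is observed the computation is immediate, so I would present the argument essentially as a direct corollary of the proof of Theorem~\ref{thm:uni plus} rather than reproving the bijection.
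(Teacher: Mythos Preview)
Your proposal is correct and is exactly the approach the paper takes: the corollary is stated as following directly from the proof of Theorem~\ref{thm:uni plus}, which already sets up the correspondence between forests with $k$ trees and partitions of $[n]$ into $k$ lists and counts the latter via a permutation and a composition divided by $k!$. Your added justification that the $k!$-to-one count is exact (since distinct nonempty lists on disjoint label sets cannot coincide) is a welcome point of rigor that the paper leaves implicit.
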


By Proposition~\ref{prop:comp} and Theorem~\ref{thm:uni plus}, we obtain the following corollary pertaining to forests that are decreasing, then increasing so that every element of the increasing part is greater than those of the preceding decreasing part. These are exactly the permutations that can be drawn on Figure~\ref{fig:132, 231, 312}.
\begin{corollary}\label{cor:Vplus}
For $n\geq 1$, $$f_n(132, 231, 312) = n! \sum_{k=1}^n \frac{1}{k!}{{n-1}\choose{k-1}}.$$
\end{corollary}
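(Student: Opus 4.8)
The plan is to obtain this identity as an immediate consequence of Theorem~\ref{thm:uni plus} together with the complementation symmetry recorded in Proposition~\ref{prop:comp}, rather than constructing any new bijection. The key observation is that applying the complement operation termwise to the pattern set $\{312, 213, 132\}$ reproduces exactly the set $\{132, 231, 312\}$ appearing in the corollary, so the two forest classes are equinumerous by a trivial symmetry.

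First I would verify the complement computation directly from the definition $\sigma_i^c = n+1-\sigma_i$ with $n=3$: one checks that $312^c = 132$, $213^c = 231$, and $132^c = 312$. Hence $\{312, 213, 132\}^c = \{132, 231, 312\}$ as (unordered) sets of patterns. Geometrically this is the statement that Figure~\ref{fig:132, 231, 312} is the reflection of Figure~\ref{fig:312, 213, 132} through a horizontal axis, which is precisely the effect of complementation on the associated grid classes, so the figures already encode the symmetry I intend to exploit.

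With this in hand, Proposition~\ref{prop:comp} applied with $m=3$ and $(\sigma_1,\sigma_2,\sigma_3) = (312,213,132)$ gives $f_n(312,213,132) = f_n(132,231,312)$ for every $n \geq 1$. Substituting the formula from Theorem~\ref{thm:uni plus} for the left-hand side then yields the claimed expression $f_n(132,231,312) = n!\sum_{k=1}^n \frac{1}{k!}\binom{n-1}{k-1}$, which completes the argument. There is no substantive obstacle here; the only point requiring care is computing the three individual complements correctly, since a single slip would match the wrong pattern sets. It is worth confirming that complementation fixes no individual element of $\{312,213,132\}$ (indeed $132$ and $312$ are interchanged while $213$ and $231$ are interchanged) yet carries the full three-element set onto $\{132,231,312\}$, so it is the equality of the two sets as wholes, not any elementwise fixing, that drives the proof.
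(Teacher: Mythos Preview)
Your proposal is correct and is essentially identical to the paper's own proof: the corollary is stated immediately after the remark that Proposition~\ref{prop:comp} and Theorem~\ref{thm:uni plus} together yield it, via exactly the complement computation $\{312,213,132\}^c=\{132,231,312\}$ that you carry out.
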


%\section{Forests avoiding the set $\{321, 132, 2143, 3142\}$}\label{upLupR}

The same argument as above can be applied to the forests that avoid the set of permutations $\{321, 132, 213\}$ (those that can be drawn on Figure~\ref{fig:321, 132, 213}) and those that avoid the set $\{123, 312, 231\}$ (those that can be drawn on Figure~\ref{fig:123, 312, 231}).

\begin{theorem}\label{thm:one descent plus}
For $n\geq 1$, $$f_n(321, 132, 213) =  f_n(123, 312, 231)  = n! \sum_{k=1}^n \frac{1}{k!}{{n-1}\choose{k-1}}.$$
\end{theorem}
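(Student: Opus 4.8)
The plan is to mirror the structure of the proof of Theorem~\ref{thm:uni plus} exactly, since the claim asserts the identical enumeration, and the only work is to establish the single-tree count and an analogous ordering bijection for each of the two new pattern sets. I would first handle $\{321, 132, 213\}$ (the set drawn on Figure~\ref{fig:321, 132, 213}). The shape of such a permutation, read off from the figure, is one that first decreases among large values and then increases among small values; equivalently, each path from a root in such a forest first moves through a decreasing run of large labels and then an increasing run of small labels, with the large-label run sitting entirely above the small-label run. The key structural claim for a single tree is: if the root is $r$, then as one reads labels along any root-to-vertex path, the labels larger than $r$ appear in decreasing order and the labels smaller than $r$ appear in increasing order, and no label smaller than $r$ can be an ancestor of a label larger than $r$. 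This is the $321,132,213$-analog of the $\{r+1,r+2,\ldots\}$-increasing, $\{r-1,\ldots\}$-decreasing description used for $\{312,213,132\}$.

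The main step is to count single trees and exhibit the bijection with $\S_n$. I would argue by the same incremental construction: fix the root $r$; the vertices must be attached in the forced order $r-1, r-2, \ldots, 1$ (the decreasing-large part, reading the complement of the earlier argument) interleaved appropriately, and at each stage there is exactly one legal position for the next forced extreme value among the already-placed vertices, yielding $(n-1)!$ trees for each fixed root and hence $n!$ single trees on $[n]$. For the explicit bijection, given $\pi\in\S_n$ I would set $\pi_1 = r$ as the root and, for each subsequent entry $i$, attach $i$ as a child of the rightmost prior entry $j$ with $i < j \le r$ when $i < r$, and as a child of the rightmost prior entry $j$ with $i > j$ when $i > r$; this is the precise dual of the rule in Theorem~\ref{thm:uni plus}, with the roles of ``greater than $r$'' and ``less than $r$'' swapped to match Figure~\ref{fig:321, 132, 213}. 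One then checks that the three forbidden patterns cannot arise along any path and that the inverse (ordering the children and reading clockwise, now with the small-then-large convention) recovers $\pi$.

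Having established that single trees are in bijection with permutations, the passage from trees to forests is identical to the earlier proof: a forest on $[n]$ with $k$ trees corresponds to an unordered partition of $[n]$ into $k$ lists, each list being a permutation of its underlying set turned into a tree; combining a permutation $\pi$ with a composition $(\lambda_1,\ldots,\lambda_k)$ of $n$ into $k$ parts and dividing by $k!$ for the irrelevant ordering of the lists gives $\frac{n!}{k!}\binom{n-1}{k-1}$ forests with $k$ trees, and summing over $k$ yields the stated formula. For the second equality, $f_n(123,312,231)$, I would simply invoke Proposition~\ref{prop:comp}: the set $\{123,312,231\}$ is the complement (entrywise $\sigma \mapsto \sigma^c$) of $\{321,132,213\}$, so the two forest classes are equinumerous, giving the same sum. (Alternatively one can run the identical single-tree argument on Figure~\ref{fig:123, 312, 231} directly.)

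The step I expect to be the main obstacle is verifying cleanly that the proposed attachment rule really produces trees avoiding all three of $321$, $132$, and $213$ and no others — in particular checking that the ``no small-label ancestor of a large label'' condition is automatically enforced by the rightmost-$j$ choice, and that the inverse map is well-defined regardless of how children are arranged. This is the only place where the swapped conventions relative to Theorem~\ref{thm:uni plus} could introduce a subtle error, so I would state the along-path order explicitly (large labels decreasing above, small labels increasing below) and confirm each forbidden pattern is blocked by that order before asserting the bijection.
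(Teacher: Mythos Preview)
Your high-level plan --- count single trees as $n!$ by an incremental attachment argument, pass to forests via partitions of $[n]$ into lists exactly as in Theorem~\ref{thm:uni plus}, and deduce the second equality from Proposition~\ref{prop:comp} --- is precisely the paper's argument. However, your structural description of $\{321,132,213\}$-avoiding paths is wrong, and the explicit attachment rule you propose does not produce the right trees.

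You claim that along a root-to-vertex path the labels larger than the root $r$ appear in \emph{decreasing} order. But if a path begins $r,\ell,\ell'$ with $r<\ell'<\ell$, that is already a $132$ pattern. The permutations drawn on Figure~\ref{fig:321, 132, 213} are exactly the cyclic shifts $r,r+1,\ldots,n,1,2,\ldots,r-1$: both the large-label segment and the small-label segment are \emph{increasing}, with the single descent occurring where they meet. Accordingly, the correct incremental order (and the one the paper uses) is to attach $r+1,r+2,\ldots,n,1,2,\ldots,r-1$; at the $k$th step the new vertex may be made a child of any of the $k$ already-placed vertices, yielding $(n-1)!$ trees for each choice of root and $n!$ trees in total.

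Your bijection rule fails the very check you flagged as the main obstacle. For $i>r$ you attach $i$ to the rightmost prior $j<i$ with no lower bound on $j$, so a large label can acquire a small-label parent: from $\pi = 2,1,3$ (so $r=2$) your rule makes $3$ a child of $1$, and the path $2,1,3$ is a $213$. Once the structural description is corrected to ``large labels increasing, then small labels increasing,'' the rest of your outline goes through verbatim and coincides with the paper's proof.
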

\begin{proof}
We begin by considering trees which avoid $\{321, 132, 213\}$. If $r$ is the root, we can attach each of the remaining labels in the order $$r+1, r+2, \ldots, n, 1, 2, \ldots, r-1.$$ Since there are $(n-1)!$ ways to do this and $r$ was chosen arbitrarily, there are $n!$ trees which avoid this set. The result for the set $\{321, 132, 213\}$ follows from the same argument as in the proof of Theorem~\ref{thm:uni plus}. (Note one can also extend this to a bijection, as was done in the proof of Theorem~\ref{thm:uni plus}.) Since $\{123, 312, 231\}$ is the complementary set of permutations, applying Proposition~\ref{prop:comp} completes the proof. 
\end{proof}

\begin{theorem}\label{thm:uni+231}
For $n\geq 1$, let $F(n)$ denote the number of forests on $\left[n\right]$ that avoid the set of permutations $\{213, 312,231\}$.  Then $F(n)$ satisfies the recurrence
\[
F(n)=\sum_{k=1}^n\sum_{r=1}^k{{n-1}\choose{k-1}}(r-1)!F(n-k)F(k-r)
\]
with $F(1)=1$.
\end{theorem}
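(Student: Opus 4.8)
The plan is to reduce the count of \emph{forests} to a count of single \emph{trees}, and then to analyze the structure of an avoiding tree directly around its root. Write $T(k)$ for the number of trees on a $k$-element label set that avoid $\{213,312,231\}$ (by relative order this depends only on $k$), and adopt the convention $F(0)=1$ for the empty forest. Two independent decompositions then combine to give the recurrence. At the level of forests I would condition on the tree $\mathcal{T}_1$ containing the vertex labelled $1$: if $\mathcal{T}_1$ has $k$ vertices, its other $k-1$ labels can be chosen from $\{2,\dots,n\}$ in $\binom{n-1}{k-1}$ ways, the tree itself can be built in $T(k)$ ways, and since an occurrence of a pattern lies on a single root-to-vertex path (so distinct trees never interact) the remaining $n-k$ vertices form an arbitrary avoiding forest, counted by $F(n-k)$. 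As each forest has a unique tree containing $1$, this yields $F(n)=\sum_{k=1}^n \binom{n-1}{k-1}\,T(k)\,F(n-k)$, and substituting the formula $T(k)=\sum_{r=1}^k (r-1)!\,F(k-r)$ proved below gives
\[
F(n)=\sum_{k=1}^n\sum_{r=1}^k \binom{n-1}{k-1}(r-1)!\,F(n-k)\,F(k-r).
\]

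The heart of the argument is this formula for $T(k)$, which I would obtain by describing the shape of an avoiding tree near its root $\rho$; say $\rho$ has rank $r$ among the $k$ labels of the tree, so $r-1$ labels lie below $\rho$ and $k-r$ lie above it. Split the children of $\rho$ into those smaller than $\rho$ and those larger. For a smaller child $c<\rho$, first no descendant $w$ of $c$ can satisfy $w>\rho$, since $\rho,c,w$ would be a forbidden $213$; then, all labels below $c$ being smaller than $\rho$, any ascent $y<z$ on a path descending from $c$ would make $\rho,y,z$ a forbidden $312$. Hence the subtree at $c$ is \emph{decreasing} with all labels below $\rho$. Dually, for a larger child $c>\rho$, a descendant $x<\rho$ would make $\rho,c,x$ a forbidden $231$, so every label in that subtree exceeds $\rho$. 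Moreover $\rho$ is the global minimum along every root-to-vertex path entering a larger child, and since the minimum of each of $213,312,231$ is not in first position, prepending $\rho$ to such a path can never create a forbidden pattern; thus the larger-child subtrees may be \emph{any} avoiding trees on labels above $\rho$. Because branches under $\rho$ do not interact, the labels $\{1,\dots,\rho-1\}$ are forced into the smaller-child branches and $\{\rho+1,\dots,k\}$ into the larger-child branches.

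Consequently an avoiding tree whose root has rank $r$ is equivalent to a pair consisting of a decreasing forest on the $r-1$ smaller labels (attached to $\rho$ as its smaller-child branches) together with an avoiding forest on the $k-r$ larger labels (attached as its larger-child branches), and conversely any such pair produces a valid avoiding tree. Using the known count of $m!$ decreasing forests on $m$ vertices (the forests avoiding $12$), the first factor contributes $(r-1)!$ and the second $F(k-r)$; summing over the rank $r$ of the root gives $T(k)=\sum_{r=1}^k (r-1)!\,F(k-r)$. The base case $F(1)=1$ is immediate.

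I expect the main obstacle to be this structural lemma for single trees, and in particular verifying that the $231$-avoidance constraint, layered on top of unimodality, forces exactly the clean split of the labels below and above the root into a decreasing forest and an \emph{independent} avoiding forest — the case analysis of which forbidden pattern appears must be complete. Once that split is established, the bijection with pairs of forests, the forest-level conditioning on the tree containing $1$, and the final substitution are all routine.
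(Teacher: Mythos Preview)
Your proposal is correct and follows essentially the same approach as the paper: condition on the tree containing the vertex $1$ to reduce to a tree count $T(k)$, then split an avoiding tree at its root into a decreasing forest on the smaller labels and an arbitrary avoiding forest on the larger labels, using exactly the same pattern-by-pattern case analysis ($213$ and $231$ to separate the labels, $312$ to force the lower part to be decreasing). If anything, you are slightly more explicit than the paper in justifying why prepending the root to an avoiding forest on the larger labels cannot introduce a forbidden pattern.
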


\begin{proof}
Let $T(n)$ denote the number of trees on $\left[n\right]$ that avoid the set of permutations $\{213, 312,231\}$.  Let $r$ denote the root of such a tree. Since this tree avoids 213 and 231, by removing the root of this tree, we are left with a forest on the set $\{1,2,\ldots, r-1\}$ and a forest on the set $\{r+1,r+2,\ldots,n\}$. Furthermore, since this tree avoids 312, the forest on $\{1,2,\ldots, r-1\}$ must be decreasing. There are $F(n-r)$ forests on $\{r+1,r+2,\ldots,n\}$ that avoids $\{213, 312,231\}$, and there are $(r-1)!$ decreasing forests on $\{1,2,\ldots, r-1\}$.  Since $r$ can be any label from $1$ to $n$, we have
\[
T(n)=\sum_{r=1}^n(r-1)!F(n-r).
\]

Now, let $k$ be the number of vertices in the tree containing $1$.  There are ${{n-1}\choose{k-1}}$ ways to choose the other $k$ vertices in the tree containing $1$, and $T(k)$ ways to construct that tree so that it avoids the set $\{213, 312,231\}$.  Also, there are $F(n-k)$ ways to build a forest on the remaining $n-k$ vertices.  Thus
\[
F(n)=\sum_{k=1}^n{{n-1}\choose{k-1}}T(k)F(n-k).
\]
Substituting for $T(k)$, we obtain the recurrence
\[
F(n)=\sum_{k=1}^n{{n-1}\choose{k-1}}F(n-k)\sum_{r=1}^k(r-1)!F(k-r).
\]
\end{proof}

\begin{corollary}\label{cor:V+213}
For $n\geq 1$, let $F(n)$ denote the number of forests on $\left[n\right]$ that avoid the set of permutations $\{231, 132,213\}$.  Then $F(n)$ satisfies the recurrence
\[
F(n)=\sum_{k=1}^n\sum_{r=1}^k{{n-1}\choose{k-1}}(r-1)!F(n-k)F(k-r)
\]
with $F(1)=1$.
\end{corollary}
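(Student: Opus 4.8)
The plan is to obtain this corollary directly from Theorem~\ref{thm:uni+231} by invoking the complementation symmetry of Proposition~\ref{prop:comp}, rather than repeating the tree/forest decomposition argument. The guiding observation is that the set $\{231, 132, 213\}$ appearing here is exactly the complement of the set $\{213, 312, 231\}$ treated in Theorem~\ref{thm:uni+231}, so the two families of forests are equinumerous and hence the counting function satisfies the same recurrence.

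First I would verify the pattern complements explicitly. Since complementation of a pattern in $\S_3$ sends $\pi_i$ to $4-\pi_i$, I compute $213^c = 231$, $312^c = 132$, and $231^c = 213$. Therefore, as unordered sets, $\{213, 312, 231\}^c = \{231, 132, 213\}$, which is precisely the set named in the corollary.

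Then, applying Proposition~\ref{prop:comp} with $\{\sigma_1, \sigma_2, \sigma_3\} = \{213, 312, 231\}$ yields $f_n(213, 312, 231) = f_n(231, 132, 213)$ for every $n \geq 1$. In particular, the quantity $F(n)$ counting forests avoiding $\{231, 132, 213\}$ agrees termwise with the function of the same name in Theorem~\ref{thm:uni+231}, so it inherits the identical recurrence and the initial condition $F(1)=1$.

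Since this is purely a symmetry argument, there is essentially no obstacle beyond correctly matching the two pattern sets under complementation. The only point requiring care is confirming that complementation carries the full set $\{213, 312, 231\}$ onto $\{231, 132, 213\}$ as unordered collections of patterns, which the three computations above settle.
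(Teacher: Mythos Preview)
Your proposal is correct and matches the paper's approach: the paper states this corollary immediately after Theorem~\ref{thm:uni+231} without proof, relying on the complementation symmetry of Proposition~\ref{prop:comp} exactly as you describe. Your explicit verification that $\{213,312,231\}^c=\{231,132,213\}$ is the only step needed.
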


\section{Forests avoiding the set $\{321, 2143, 3142\}$}\label{upup}

Next, we consider the labeled rooted forests on $[n]$ for which each path from the root to a vertex has at most one descent. %Let $b(n)$ denote the number of such forests on $[n]$. 
These forests are exactly those that avoid the patterns $321$, $2143$, and $3142$.

We start with a lemma. We say that a rooted forest has a \textit{descent} at a vertex if it is greater than at least one of its children, and we say it has a \textit{proper descent} at a vertex if it is greater than all its children. 

\begin{lemma}\label{lem:des}
Let $n\geq 2$. Then the number of trees on $[n]$ that have a proper descent at the root and no other descents is equal to $n!/2$. 
\end{lemma}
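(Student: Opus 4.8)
The plan is to decompose such a tree by its root and reduce the count to one about increasing forests with a constraint on where their roots may lie. First I would unpack the two hypotheses using the definitions of descent and proper descent. A proper descent at the root means the root $r$ exceeds every one of its children, while ``no other descents'' means every non-root vertex is smaller than each of its children. Reading this along root-to-vertex paths, the only decreasing step is from the root to its children, and everything strictly below the root is increasing. Hence deleting the root leaves an increasing forest on $[n]\setminus\{r\}$ whose tree-roots are exactly the children of $r$, and the proper-descent condition is precisely the requirement that all of these roots be smaller than $r$. Conversely, any increasing forest on $[n]\setminus\{r\}$ with all of its roots below $r$, reattached beneath a root labeled $r$, yields a valid tree. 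So for a fixed root $r$ the trees are in bijection with increasing forests on $[n]\setminus\{r\}$ in which every tree-root is less than $r$.

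Next I would count, for each fixed $r$, the increasing forests on the $(n-1)$-element set $[n]\setminus\{r\}$ all of whose roots lie below $r$. Relabeling this set order-isomorphically to $[n-1]$, the $r-1$ elements below $r$ become $\{1,\dots,r-1\}$, so the condition becomes that every root is among the smallest $t:=r-1$ labels. I would then prove the sub-claim that the number of increasing forests on $[m]$ whose roots all lie in $\{1,\dots,t\}$ equals $t\cdot(m-1)!$, by inserting the labels $1,2,\dots,m$ in increasing order: each newly inserted label is the current maximum and hence must be a leaf, which may be attached below any of the $j-1$ previously inserted vertices and may additionally start a new tree precisely when $j\le t$. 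Multiplying the per-step counts gives
$$\prod_{j=1}^{t} j \cdot \prod_{j=t+1}^{m}(j-1) = t!\cdot\frac{(m-1)!}{(t-1)!} = t\cdot(m-1)!.$$
With $m=n-1$ and $t=r-1$, the number of valid trees with root $r$ is $(r-1)(n-2)!$.

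Finally I would sum over the choice of root. Since $r=1$ contributes $0$ (no root can be smaller than $1$), summing $r$ from $1$ to $n$ gives
$$\sum_{r=1}^{n}(r-1)(n-2)! = (n-2)!\binom{n}{2} = (n-2)!\cdot\frac{n(n-1)}{2} = \frac{n!}{2},$$
as claimed, and I would sanity-check this against $n=2$ (one tree, $2\to1$) and $n=3$ (the three trees $2\,{\to}\,1\,{\to}\,3$, $3\,{\to}\,1\,{\to}\,2$, and the root $3$ with children $1,2$).

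I expect the main obstacle to be the constrained increasing-forest count rather than the structural reduction. I must make sure the insertion procedure really is a bijection onto the increasing forests with the prescribed set of allowed roots, and that the ``new tree'' option is available exactly when the inserted label does not exceed the threshold $t$, taking care of the boundary cases $j=1$ and $t=0$. The structural characterization itself is a routine translation of the two local conditions, but it is worth stating carefully that every root of the residual forest (in particular the label $1$, which can never be a child of any other vertex) must sit below the global root, since this is exactly what makes the reduction and the threshold condition line up.
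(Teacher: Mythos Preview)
Your argument is correct. The structural reduction is right: a tree with a proper descent at the root and no other descents is exactly a root $r$ sitting above an increasing forest on $[n]\setminus\{r\}$ all of whose roots are below $r$. Your insertion count for increasing forests on $[m]$ with all roots in $\{1,\dots,t\}$ is also correct, and the boundary cases ($j=1$ forces a root, $t=0$ gives zero forests) are handled. The sum $\sum_{r}(r-1)(n-2)!=\binom{n}{2}(n-2)!=n!/2$ finishes it.

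The paper takes a different route: it gives a single bijection between these trees and the permutations of $[n]$ with $\pi_1>\pi_2$, by a variant of the standard map $\varphi$ (let $\pi_1$ be the root, make every other left-to-right minimum a child of the root, and attach each remaining $i$ under the rightmost smaller predecessor). Since exactly half of all permutations start with a descent, the count is $n!/2$. Your approach is a root-by-root decomposition plus a constrained increasing-forest count; it is more elementary and yields the refinement that exactly $(r-1)(n-2)!$ such trees have root $r$. The paper's approach, on the other hand, packages the whole set into a single explicit bijection with permutations, and that bijection (called $\rho$ after composing with inverse) is reused in the proof of Theorem~\ref{thm:one descent}. So both arguments are clean, but the bijective one feeds directly into the next construction in the paper.
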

\begin{proof}
%Start with the assumption that the root is labeled with $i\in[n]$. We must attach the remaining $n-1$ labels. Since the tree has no other descents we can attach these in increasing order. As with building increasing trees, it's clear that there are $(i-1)!$ ways to attach the first $i-1$ labels. When attaching the remaining labels, we may not attach them to the root since the label of the root must be greater than all of its children. Therefore there is exactly $(i-1)(n-2)!$ trees with root $i$ which have a proper descent at the root and no other descents. Summing over all possible roots, we obtain the result. 

We can construct a bijection of these trees with permutations of $[n]$ that start with a descent (i.e. in which $\pi_1>\pi_2$) in the following way. Given a permutation with a descent at 1, let $\pi_1$ be the root of the corresponding tree and let all other left-to-right minima be children of the root. For the remaining vertices, let $i$ be the child of the rightmost element $j$ of $\pi$ that precedes $i$ and is less than $i$. Since $\pi$ starts with a descent, we are guaranteed that the only children of the root will be the other left-to-right minima, and thus the tree will have a proper descent at the root. As defined, this clearly will not introduce other descents. To invert this process, write the tree with all children drawn from left to right in increasing order and traverse the tree clockwise starting at the root and reading off labels as they are encountered.  This process is very similar to the bijection $\varphi$ between permutations and increasing trees. Since there are $n!/2$ permutations that start with a descent (indeed, the complement is a bijection between permutations that start with a descent and those that start with an ascent), our proof is complete. 
\end{proof}

We can define a bijection, which we will call $\rho$, from permutations $\pi\in\S_n$ in which 2 appears before 1 to trees on $[n]$ that have a proper descent at the root and no other descents by taking the inverse of $\pi$ and applying the bijection described in the proof of Lemma~\ref{lem:des}. The fact that 2 appears before 1 in the permutation guarantees that the inverse permutation starts with a descent and so we can apply the bijection described above to the inverse permutation. 

In the next proof, we will consider ordered partitions of $[n]$ for which the elements in each subset are also ordered up to reverse.  For example, a few partitions of $[5]$: the partition $\{12\}\{345\}$ is different than both $\{12\}\{435\}$ and $\{345\}\{12\}$ but is the same as $\{21\}\{345\}$ and $\{21\}\{543\}$. For $n\geq 1$, the trees described in Lemma~\ref{lem:des} are in bijection via $\rho$ with these partitions of $[n]$ into one part, since these are clearly permutations which can be written (up to reverse) in which 2 appears before 1. 

\begin{theorem}\label{thm:one descent}
For $n\geq 1$, $$f_n(321, 2143, 3142) = n! \left(1 + \sum_{\substack{1\leq \ell\leq k\leq n \\ \ell+k\leq n}} \frac{1}{2^{\ell}} {{n-k-1}\choose{\ell-1}}{{k}\choose{\ell}}\right).$$
\end{theorem}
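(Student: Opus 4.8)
The plan is to reduce the count to a structural decomposition of these forests and then to set up a bijection in the spirit of the maps $\tau$ and $\rho$ used earlier, organizing the enumeration by two parameters: the size of an ``increasing core'' and the number of ``descent trees'' grafted onto it.

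First I would prove a decomposition lemma, working from the fact that a forest lies in $F_n(321,2143,3142)$ exactly when every root-to-vertex path has at most one descent. Call a vertex $v$ a \emph{core vertex} if the path from its root to $v$ is increasing. The core vertices form an increasing subforest $C$: the set is closed under taking ancestors (a prefix of an increasing path is increasing), and each non-root core vertex is larger than its parent. Now let $w$ be any non-core vertex whose parent lies in $C$; since the path to $\mathrm{parent}(w)$ is increasing but the path to $w$ is not, the unique descent must be the final edge, so $\mathrm{parent}(w) > w$. Call such $w$ a \emph{descent root}, and call a core vertex carrying at least one descent root an \emph{apex}. Because no path may contain a second descent, the subtree hanging below each descent root is increasing, and an apex together with the subtrees hanging from it along descent edges is precisely a tree with a proper descent at the root and no other descents, i.e. one of the trees enumerated by Lemma~\ref{lem:des}. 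Thus each forest decomposes uniquely as an increasing core forest $C$ with several such \emph{descent trees} grafted onto distinguished apexes, subject to the constraint that each apex exceeds the roots of the subtrees grafted to it.

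Next I would encode this data as a decorated object, following the proof of Theorem~\ref{thm:uni plus}. Let $k=|C|$ be the number of core vertices and let $\ell$ be the number of apexes (equivalently, of descent trees). The apexes form an $\ell$-subset of the $k$ core vertices, contributing $\binom{k}{\ell}$; the $n-k$ non-core vertices are distributed among the $\ell$ descent trees according to a composition of $n-k$ into $\ell$ positive parts, contributing $\binom{n-k-1}{\ell-1}$; and, exactly as in Lemma~\ref{lem:des} and the map $\rho$, each descent tree is recorded by a permutation of its label set taken up to reverse, contributing a factor $\tfrac12$ per descent tree, hence $2^{-\ell}$ overall. The remaining freedom---the actual labels together with the increasing core shape built via $\varphi$---should account for the global factor $n!$. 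Summing the resulting term $\tfrac{n!}{2^{\ell}}\binom{n-k-1}{\ell-1}\binom{k}{\ell}$ over $1\le \ell \le k \le n$ with $\ell+k\le n$ yields the sum in the statement; here the constraint $\ell+k\le n$ is exactly the requirement $n-k\ge \ell$ that each of the $\ell$ descent trees receive at least one non-core vertex. The purely increasing forests---the case $\ell=0$, of which there are $n!=f_n(21)$---give the isolated leading $1$.

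The delicate point, which I expect to be the main obstacle, is the value constraint that each apex be larger than the roots of its grafted subtrees, coupled with the requirement that the core itself be increasing. For a \emph{fixed} partition of $[n]$ into a core set and descent-tree label sets these constraints are genuinely restrictive---for small $n$ one checks directly that the number of admissible configurations depends on the chosen label sets, and some apex choices are forbidden outright---so they can only dissolve after summing. The heart of the proof is therefore a bijection that \emph{decouples} these constraints, in the manner in which $\tau$ turns partitions into lists into $\{312,213,132\}$-avoiding forests and $\rho$ turns permutations with $2$ before $1$ into proper-descent trees: choosing a single global permutation $\sigma\in\S_n$ together with the discrete shape data (the $\ell$-subset of apexes, the composition of $n-k$, and the reverse class of each descent tree) should reconstruct a unique forest, and conversely. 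Verifying that this map is well defined and invertible---in particular that the reverse ambiguity contributes exactly $2^{-\ell}$ and that no labeling is over- or under-counted---is where the real work lies; the summation itself is then routine.
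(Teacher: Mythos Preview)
Your proposal follows essentially the same approach as the paper: both decompose a forest in $F_n(321,2143,3142)$ into its maximal increasing subforest (your ``core'') together with the trees of Lemma~\ref{lem:des} hanging from the apexes, and both arrive at the formula via a (permutation, composition) encoding with a $2^{-\ell}$ correction for the reverse symmetry. The ``decoupling bijection'' you correctly flag as the crux is exactly what the paper supplies: the intermediate object is the set of \emph{ordered} partitions of $[n]$ into lists taken up to reverse, and the map $\psi$ applies $\rho$ to each part (producing the descent trees) and then $\varphi$ to the sequence of resulting roots (producing the core), which simultaneously handles the apex-value constraints and the increasing-core constraint. In particular, the $\ell$-subset of $[k]$ and the composition of $n-k$ you record are precisely the data of a composition of $n$ into $k$ parts with $\ell$ parts of size at least two, so your encoding and the paper's coincide.
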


\begin{proof}
The proof is similar to the one for unimodal forests. Here, we find a bijection between forests avoiding $\{321, 2143, 3142\}$ and ordered partitions of $[n]$ for which the elements in each subset are also ordered (up to reverse). We can see that these partitions are enumerated by the formula above. Indeed, we can obtain such a partition with $k$ parts where $\ell$ of these parts is of size at least two by taking any permutation of $[n]$ (of which there are $n!$) and a composition of $n$ with $k$ parts of which $\ell$ parts are of size at least two (of which there are ${{n-k-1}\choose{\ell-1}}{{k}\choose{\ell}}$).  The composition determines the sizes of the parts of the partition, and the division by $2^\ell$ accounts for the symmetry under reverse. For example, if $\pi = 351246$ is the permutation and $\lambda = (2,1,3)$ is the composition, then $\{35\}\{1\}\{246\}$ is the corresponding partition. Since $\ell =2$, there are $2^2 = 4$ permutations that result in the same partition (up to reverse of the parts), the other three in this case being $351642$, $531246$, and $531642$. 

The bijection, $\psi$, is as follows. Take such a partition. 
% If there are $k$ parts of the partition, the largest increasing subtree will be of size $k$. (Note that this is not the same as the top-down maxima.) 
For each of the $k$ parts of the partition, we will obtain a tree. The roots of these trees will constitute the largest increasing subforest of the forest corresponding to the partition. For a part of the partition $\{j\}$ of size one, we obtain the singleton tree with that element. For a part of size at least two, write the ordered list so that the smallest element is to the right of the second smallest element (which can be done by reversing the list if this is not already satisfied) and apply the bijection $\rho$ to each part. For each part, we obtain a tree that is increasing except at the root, which is larger than all its children (but not necessarily all of its descendants). Now, each part of the partition corresponds to some tree and the roots (which are ordered by their position in the ordered partition) of these trees can be arranged into an increasing subforest via $\varphi$. We obtain a forest which has an increasing subforest of size $k$ and which has at most one descent along any path from a root to a vertex. 
\end{proof}

\begin{figure}[h]
\centering
\includegraphics{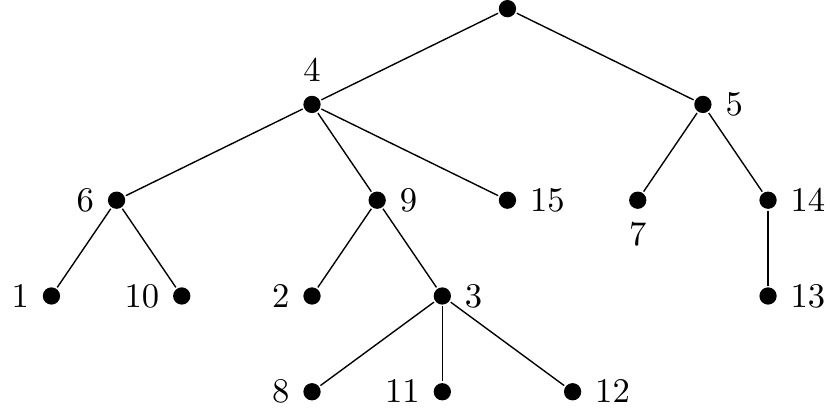}
  \caption{This is an example of a forest on $[15]$ that avoids the three patterns $321$, $2143$, and $3142$. The forest pictured is $\psi(\{5\}\{14,13\} \{7\} \{4\} \{15\} \{12,3,11,2,9,8\} \{6,1\} \{10\})$. Notice that this forest has three descents, but there is only one descent along each path from the root to a given vertex. }
  \label{fig:one descent}
  \end{figure}

The forest in Figure~\ref{fig:one descent} is the image under $\psi$ of the following ordered partition of lists (up to reverse): $$\{5\}\{14,13\} \{7\} \{4\} \{15\} \{12,3,11,2,9,8\} \{6,1\} \{10\}.$$ Here it is written so that in each part, the smallest element is to the right of the second smallest element. For each part, one can obtain a tree on those vertices which has a proper descent at the root vertex and no other descents via the map $\rho$ (which is to take the inverse of the partition listed and create a tree via the map described in the proof of Lemma~\ref{lem:des}). For example, the complement of $12,3, 11, 2, 9, 8$ is $9, 3, 12, 11, 8, 2$. This sequence has a descent in the first position and thus we can apply the map in the proof of Lemma~\ref{lem:des}. The tree obtained is the subtree  composed of vertex 9 and the descendants of 9 in Figure~\ref{fig:one descent}. The roots of the trees obtained constitute the largest increasing subforest of the forest in Figure~\ref{fig:one descent}. In this case, these are the vertices 5, 14, 7, 4, 15, 9, 6, 10. Their arrangement into an increasing forest is obtained via the map $\varphi$.

By Proposition~\ref{prop:comp} and Theorem~\ref{thm:one descent}, we obtain the following corollary pertaining to forests that satisfy the condition that there is at most one ascent per path from a root to a vertex. 
\begin{corollary}\label{cor:downdown}
For $n\geq 1$, $$f_n(123, 3412, 2413) = n! \left(1 + \sum_{\substack{1\leq \ell\leq k\leq n \\ \ell+k\leq n}} \frac{1}{2^{\ell}} {{n-k-1}\choose{\ell-1}}{{k}\choose{\ell}}\right).$$
\end{corollary}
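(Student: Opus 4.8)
The plan is to deduce this corollary directly from Theorem~\ref{thm:one descent} by invoking the complementation symmetry established in Proposition~\ref{prop:comp}. The key observation is that the set $\{123, 3412, 2413\}$ is precisely the set of complements of the patterns $\{321, 2143, 3142\}$ appearing in Theorem~\ref{thm:one descent}, so the two sets of forests are equinumerous and inherit the same enumeration formula.

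First I would verify the pattern complements explicitly. For a pattern $\sigma\in\S_k$, the complement satisfies $\sigma_i^c = k+1-\sigma_i$. Since $321\in\S_3$, replacing each entry $v$ by $4-v$ gives $321^c = 123$. Since $2143\in\S_4$, replacing each entry $v$ by $5-v$ gives $2143^c = 3412$, and the same substitution gives $3142^c = 2413$. Hence $\{321, 2143, 3142\}^c = \{123, 3412, 2413\}$, confirming that these are exactly the forests for which every path from a root to a vertex has at most one ascent (the mirror image of the ``at most one descent'' condition characterizing the forests of Theorem~\ref{thm:one descent}).

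Then, applying Proposition~\ref{prop:comp} with $\sigma_1 = 321$, $\sigma_2 = 2143$, $\sigma_3 = 3142$ yields $f_n(321, 2143, 3142) = f_n(123, 3412, 2413)$. Substituting the formula of Theorem~\ref{thm:one descent} for the left-hand side gives the stated enumeration. There is no genuine obstacle here: all the combinatorial content is already carried by the bijection $F\mapsto F^c$ underlying Proposition~\ref{prop:comp}, and the only step requiring care is the routine verification that each of the three patterns complements correctly into the desired set. This mirrors exactly how Corollaries~\ref{cor:V}, \ref{cor:unimodal plus}, and \ref{cor:Vplus} were obtained from their corresponding theorems.
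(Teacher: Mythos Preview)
Your proposal is correct and follows exactly the paper's approach: the corollary is stated immediately after noting that it follows from Proposition~\ref{prop:comp} and Theorem~\ref{thm:one descent}, and your explicit verification of the complements $321^c=123$, $2143^c=3412$, $3142^c=2413$ is precisely the routine check underlying that remark.
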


\section{Open questions and future directions}

This new definition of pattern avoidance on forests introduces many open questions that may be interesting to investigate further. One clear direction is to enumerate those forests that avoid a single pattern or other sets of patterns. Many of the questions that can be posed for pattern avoidance of permutations can also be posed in this context. For example, one could study consecutive pattern avoidance or occurrences of patterns in these forests. 

We can define a similar notion of pattern avoidance for ordered forests or forests of binary trees. Some data collected on both classical and consecutive pattern avoidance for these forests can be found below.

% enumerating those avoiding a single pattern, 
% plane trees
% consecutive patterns (here we can include the tables for both plane and consecutive)

\begin{figure}[H]
\centering
\renewcommand{\arraystretch}{1.25}
\begin{tabular}{c|c|c|c|c|c|c}
 &\multicolumn{3}{c|}{Classical} &\multicolumn{3}{c}{Consecutive}\\
 \hline
n & $321$ & $231$ & $132$ &$321$ & $231$ & $132$  \\
\hline
$1$ &$1$ &$1$ &$1$ &$1$ &$1$ &$1$\\
\hline
$2$ &$3$ &$3$ &$3$ &$3$ &$3$ &$3$\\
\hline
$3$ &$14$ &$14$ &$14$ &$14$ &$14$ &$14$\\
\hline
$4$ &$87$ &$87$ &$87$ &$90$ &$89$ &$89$\\
\hline
$5$ &$668$ &$667$ &$668$ &$747$ &$723$ &$722$\\
\end{tabular}
\caption{The number of (unordered) binary forests on $\left[n\right]$ that avoid the given (classical or consecutive) pattern.}
\label{fig:non-planar binary forests}
\end{figure}

\begin{figure}[H]
\centering
\renewcommand{\arraystretch}{1.25}
\begin{tabular}{c|c|c|c|c|c|c}
 &\multicolumn{3}{c|}{Classical} &\multicolumn{3}{c}{Consecutive}\\
 \hline
n & $321$ & $231$ & $132$  & $321$ & $231$ & $132$\\
\hline
$1$ &$1$ &$1$ &$1$ &$1$ &$1$ &$1$\\
\hline
$2$ &$4$ &$4$ &$4$ &$4$ &$4$ &$4$\\
\hline
$3$ &$29$ &$29$ &$29$ &$29$ &$29$ &$29$\\
\hline
$4$ &$304$ &$304$ &$304$ &$307$ &$306$ &$306$\\
\hline
$5$ &$4158$ &$4156$ &$4158$ &$?$ &$?$ &$?$ \\
\end{tabular}
\caption{The number of ordered forests on $\left[n\right]$ that avoid the given (classical or consecutive) pattern.}
\label{fig:plane forets}
\end{figure}

%\section*{References}

\bibliographystyle{plain}
\bibliography{References_Trees}

\end{document}